\def\bb{\pmb{b}}
\def\be{\pmb{e}}
\def\bw{\pmb{w}}
\def\bx{\pmb{x}}
\def\by{\pmb{y}}
\def\bone{\pmb{1}}
\def\bbO{\mathbb{O}}
\def\bbP{\mathbb{P}}
\def\bbR{\mathbb{R}}
\def\scrH{\mathscr{H}}
\def\scrR{\mathscr{R}}
\def\cR{{\cal R}}
\def\wtd{\widetilde}
\def\what{\widehat}
\DeclareMathOperator{\diag}{diag}
\DeclareMathOperator*{\opt}{opt}
\DeclareMathOperator{\rank}{rank}
\DeclareMathOperator{\rmd}{d}
\DeclareMathOperator{\tr}{tr}
\DeclareMathOperator{\F}{F}
\DeclareMathOperator{\T}{T}
\DeclareMathOperator{\KKT}{KKT}
\def\scrR{\mathscr{R}}
\newtheorem{theorem}{Theorem}[section]
\theoremstyle{definition}
\newtheorem{remark}{Remark}[section]
\numberwithin{equation}{section}
\numberwithin{figure}{section}
\numberwithin{table}{section}
\title{An NEPv Approach for Feature Selection \\
via Orthogonal OCCA
 with  the $(2,1)$-norm Regularization}
\author{Li Wang%
\thanks{Department of Mathematics, University of Texas at Arlington, Arlington, TX 76019-0408, USA.
        Supported in part by NSF DMS-2407692.
        Email: {\tt li.wang.edu}.}
\and Lei-Hong Zhang%
\thanks{School of Mathematical Sciences, Soochow University, Suzhou 215006, Jiangsu, China.
 Supported in part by the National Natural Science Foundation of China (NSFC-12471356, NSFC-12371380), Jiangsu Shuangchuang Project (JSSCTD202209),  Academic Degree and Postgraduate Education Reform Project of Jiangsu Province, and China Association of Higher Education under grant 23SX0403.
        Email: {\tt longzlh@suda.edu.cn}.}
\and Ren-Cang Li%
\thanks{Department of Mathematics, University of Texas at Arlington, Arlington, TX 76019-0408, USA.
        Supported in part by NSF DMS-2407692.
        Email: {\tt rcli@uta.edu}.}
}
\date{February 22, 2025
}
\begin{document}

\maketitle

\begin{abstract}
A novel feature selection model via orthogonal canonical correlation analysis
with the $(2,1)$-norm regularization is proposed, and the model  is solved by
a practical NEPv approach
(nonlinear eigenvalue problem with eigenvector dependency),
yielding a feature selection method named OCCA-FS. It is proved that OCCA-FS always produces a sequence of approximations
with monotonic objective values and is globally convergent.
Extensive numerical experiments are performed to compare OCCA-FS against existing feature selection methods.
The numerical results demonstrate that
OCCA-FS produces superior classification performance
and often comes out on the top among all feature selection methods in comparison.

\bigskip
\noindent
{\bf Keywords:}
Feature selection,
Orthogonal CCA,
$(2,1)$-norm regularization,
Sparse projection,
NEPv

\smallskip
\noindent
{\bf Mathematics Subject Classification}  58C40; 65F30; 65H17; 65K05; 90C26
\end{abstract}


\section{Introduction}\label{sec:intro}

High-dimensional datasets have been increasingly available due to the rapid development of information technology. Feature selection is one of the most important tools in machine learning. It aims to select a subset of most important features from a large number of original features for a compact and informative representation \cite{Guel:2003}. As a result, feature selection is generally used as a preprocessing step to remove irrelevant or noisy features and, at the same time,
select the most important features, from  real-world data before conducting other machine learning tasks, such as classification.

Supervised feature selection (SFS) is a type of a feature selection paradigm where the classification label information is available to guide  selection. Methods for SFS generally fall into three categories: filter, wrapper, and embedded \cite{Saeys2007}. Filter methods, such as T-test \cite{Montgomery2007}, rank input features by their intrinsic and statistical characteristics, independent of learning algorithms; Wrapper methods first propose feature subsets and then evaluate them according to certain learning objective, such as SVM in \cite{Guyon2002}; Embedded methods aim to search for an optimal subset of features by solving some specific mathematical learning model, such as  correntropy induced robust feature selection (CRFS) \cite{He2012}.

Least square regression (LSR) is the most common embedded statistical methodology for supervised feature selection. Within the umbrella, there are variations designed for various purposes. In particular, LSR confined to the Stiefel manifold can retain more statistical and structural information \cite{Zhang2018,Wu2020} than otherwise.
For classification, orthogonal least squares regression (OLSR)  can preserve more discriminative properties in the projection subspace than the standard LSR and can avoid trivial solutions \cite{Cai2006}.

Let $X \in \bbR^{n \times p}$ be an input data matrix
whose columns are $p$ sample data points
and let $\by \in \{1, \ldots, k\}^p$ be the associated class labels,
where $n$ is the number of input features, $k$ is the number of class labels.
OLSR aims to estimate a linear function
$$
f(\bx) = P^{\T} \bx + \bb
$$
by minimizing the least square error on the given sample data with respect to the one-hot representation of the class labels
$\by$, where $P \in \bbR^{n \times k}$ is the linear coefficient matrix
restricted to the Stiefel manifold, and $\bb \in \bbR^k$ is the intercept term.
Specifically, OLSR determines the linear function $f(\cdot)$ and the intercept term $\bb$
by the following optimization problem
\begin{equation} \label{eq:opt-OLSR}
	\min_{P \in \bbO^{n\times k},\, \bb \in \bbR^k} \| (P^{\T} X + \bb \bone_p^{\T}) - Y\|_{\F}^2,
\end{equation}
where $Y \in \{0, 1\}^{k \times p}$ is the one-hot representation matrix of  class labels $\by$ of sample data points,
$\bone_p\in\bbR^p$ is the vector of all ones, and
        $$
        \bbO^{n\times k}=\{P\in\bbR^{n\times k}\,:\,P^{\T}P=I_k\}\subset\bbR^{n\times k}
        $$
is the Stielfel manifold. As is commonly done, the intercept term $\bb$ can be eliminated by first minimizing
the objective of \eqref{eq:opt-OLSR} over $\bb$, given $P$, to get
$\bb=(P^{\T}X-Y)\cdot(1/p)\bone_p$. Hence OLSR \eqref{eq:opt-OLSR} is reduced to
\begin{equation} \label{eq:opt-OLSR-no-intercept}
	\min_{P \in \bbO^{n\times k}} \| P^{\T} XC_p - YC_p\|_{\F}^2,
\end{equation}
which will still be referred to  as OLSR henceforth,
where $C_p = I_p - \frac{1}{p}\bone_p \bone_p^{\T}\in\bbR^{p\times p}$ is the centering matrix.
OLSR (\ref{eq:opt-OLSR-no-intercept}) is also known as an
{\em unbalanced orthogonal Procrustes problem\/},
and difficult to solve~\cite{Elpa:1999}.
Expand the objective of OLSR \eqref{eq:opt-OLSR-no-intercept} in terms of matrix traces to get
$$
\| P^{\T} XC_p - YC_p\|_{\F}^2
   =\tr(P^{\T}AP)-2\tr(P^{\T}D)+\tr(YC_pY^{\T}),
$$
where
\begin{equation}\label{eq:A-and-D}
A=XC_pX^{\T}, \quad D=XC_pY^{\T}.
\end{equation}
Since $\tr(YC_pY^{\T})$ is a constant, i.e., independent of $P$, OLSR \eqref{eq:opt-OLSR-no-intercept} is thus
equivalent to
\begin{equation} \label{eq:opt-OLSR-no-intercept'}
	\min_{P \in \bbO^{n\times k}} \big[\tr(P^{\T}AP)-2\tr(P^{\T}D)\big].
\end{equation}
The mathematical idea behind OLSR \eqref{eq:opt-OLSR-no-intercept'} is to bring
$P^{\T} XC_p$ towards $YC_p$ as close as possible in the metric of the matrix Frobenius norm. An alternative idea for the same purpose is via maximizing some kind of ``correlation'' between $P^{\T} XC_p$ and $YC_p$.
Borrowing the idea from canonical correlation analysis (CCA) in statistics while taking $YC_p$ in its entirety, i.e.,
without being projected,
we naturally define the ``correlation'' as
$\tr^2([P^{\T} XC_p][YC_p]^{\T})/\tr([P^{\T} XC_p][P^{\T} XC_p]^{\T})$, leading to the following orthogonal-type CCA formulation, instead of OLSR \eqref{eq:opt-OLSR-no-intercept'},
\begin{equation} \label{eq:opt-OCCA}
\max_{P \in \bbO^{n\times k}} \frac {\tr^2(P^{\T}D)}{\tr(P^{\T}AP)},
\end{equation}
where $A$ and $D$ are as defined in \eqref{eq:A-and-D}.
Optimization problem in this form appeared previously  in \cite{zhwb:2022} and was called
the OCCA (orthogonal CCA) subproblem. Henceforth for convenience, it will be abusively referred to as OCCA for short.


For feature selection, the $(2,1)$-norm regularization is often employed to induce row sparsity in the linear coefficient matrix $P$ in order to rank features \cite{He2012,Zhang2018}. OCCA \eqref{eq:opt-OCCA}
with  the $(2,1)$-norm regularization takes the form
\begin{equation} \label{eq:opt-OCCA+L21}
\max_{P\in\bbO^{n\times k}}\left\{f(P):=\frac {[\tr(P^{\T}D)]^2}{\tr(P^{\T}AP)}-\alpha\|P\|_{2,1}\right\},
\end{equation}
which will be referred to as OCCA21 henceforth,
where $\alpha>0$ is the regularization parameter, and the $(2,1)$-norm $\|P\|_{2,1}$ is defined
as
$$
\|P\|_{2,1}=\sum_{i=1}^n\|P_{(i,:)}\|_2
$$
with $\|P_{(i,:)}\|_2$ denoting the vector $2$-norm of the $i$th row vector of $P$.
With a properly chosen regularization parameter $\alpha$, it is expected that the optimizer of
OCCA21 \eqref{eq:opt-OCCA+L21} have (many)
negligible rows, i.e., rows such that $\|P_{(i,:)}\|_2\ll 1$. Features corresponding to those negligible rows
in the sample data points are then considered not important and thus can be deselected,
while the rest of the features are selected. In practice, however, this is often
performed by sorting all row norms $\|P_{(i,:)}\|_2$ decreasingly and then
selecting those features corresponding to first many largest row norms.

The rest of this paper is organized as follows.
In \cref{sec:OCCAvsOLSR}, we explore a connection between OCCA21 \eqref{eq:opt-OCCA+L21} and
a recent OLSR-OS (OLSR with optimal scaling) model of \cite{Zhang2018} so as to explain that
\eqref{eq:opt-OCCA+L21} implicitly conceals the optimal scaling in OLSR-OS.
In \cref{sec:OCCA+L21},
we
establish our NEPv theory on OCCA21 \eqref{eq:opt-OCCA+L21} and our practical NEPv
approach for numerically solving \eqref{eq:opt-OCCA+L21}, yielding our own feature selection method
which we will call OCCA-FS.
Extensive numerical experiments on  OCCA-FS in comparison with three
other the-state-of-art methods are conducted in \cref{sec:egs}. Finally conclusions are drawn in \cref{sec:concl}.

{\bf Notation.}
  $\bbR^{m\times n}$  is the set of $m\times n$ real matrices,  $\bbR^n=\bbR^{n\times 1}$, and $\bbR=\bbR^1$;
        $\bbR_+:=\{x\in\bbR\,:\,x\ge 0\}$ and $\bbR_{++}:=\{x\in\bbR_+\,:\,x> 0\}$.
  $I_n\in\bbR^{n\times n}$ is the identity matrix or simply $I$ if its size is clear from the context, and $\be_j$ is the $j$th column of $I$ of an apt size.

  For $B\in\bbR^{m\times n}$, $B^{\T}$ stands for the transpose of a matrix/vector $B$;
      $\rank(B)$ is the rank of $B$, and
      $B_{(i,:)}$ is the $i$th row of $B$. For $A\in\bbR^{n\times n}$, $A\succ 0\, (\succeq 0)$ means that it is symmetric and positive definite (semi-definite), and
        accordingly
        $A\prec 0\, (\preceq 0)$ if $-A\succ 0\, (\succeq 0)$.

  The {\em thin\/} SVD of $B\in\bbR^{m\times n}$ ($m\ge n$) refers to $B=U\Sigma V^{\T}$ with
        $$
        \Sigma=\diag(\sigma_1(B),\sigma_2(B),\ldots,\sigma_n(B))\in\bbR^{n\times n},
        \,\,
        U\in\bbO^{m\times n},\,\,
        V\in\bbO^{n\times n}.
        $$
        The singular values $\sigma_j(B)$  are always arranged decreasingly as
        $
        \sigma_1(B)\ge\cdots\ge\sigma_n(B)\ge 0.
        $
        Accordingly, $\|B\|_2$, $\|B\|_{\F}$, and $\|B\|_{\tr}$ are the spectral, Frobenius, and
        trace         norms of $B$:
        $$
        \|B\|_2=\sigma_1(B),\,\,
        \|B\|_{\F}=\Big(\sum_{i=1}^n[\sigma_i(B)]^2\Big)^{1/2},\,\,
        \|B\|_{\tr}=\sum_{i=1}^n\sigma_i(B),
        $$
        respectively. The trace norm is  also known as the {\em nuclear norm}. Unless otherwise explicitly stated, SVD always
        refers to {\em thin\/} SVD.
        With the SVD, a polar decomposition of $B$ is given by $B=QH$ \cite{high:2008} where $Q=UV^{\T}$
        and $H=V^{\T}\Sigma V$,  and
        $Q$ is called  an {\em orthogonal polar factor\/} and it is unique if $\rank(B)=n$ \cite{li:1993b,li:2014HLA}.

\section{OCCA vs. OLSR}\label{sec:OCCAvsOLSR}
\subsection{Connection to OLSR-OS}
Recently, the authors in \cite{Zhang2018} noticed that there could be a mismatch in magnitude
between sample data in $X$ and labels in $Y$ which is usually of $O(1)$. Confined to the Stiefel manifold,
$P$ cannot compensate large discrepancy in magnitude, if any, between the two. To overcome such a potential discrepancy, they introduced a scaling parameter to better
align the magnitudes of sample data points with their labels and called the new model OLSR-OS.
Specifically, OLSR-OS  determines the linear function $f(\bx) = P^{\T} \bx + \bb$
by
\begin{equation} \label{eq:opt-OLSR-OS}
	\min_{\gamma\in\bbR,\, P \in \bbO^{n\times k},\, \bb \in \bbR^k} \| \gamma(P^{\T} X + \bb \bone_p^{\T}) - Y\|_{\F}^2,
\end{equation}
where $\gamma$ is a scaling parameter. Similarly, the intercept term $\bb$ can be optimized out first to yield
\begin{equation} \label{eq:opt-OLSR-OS-no-intercept}
	\min_{\gamma\in\bbR,\,P \in \bbO^{n\times k}} \| P^{\T}(\gamma X)C_p - YC_p\|_{\F}^2.
\end{equation}
Expand the objective of OLSR-OS \eqref{eq:opt-OLSR-OS-no-intercept} in terms of matrix traces to get
$$
\| P^{\T}(\gamma X)C_p - YC_p\|_{\F}^2
   =\gamma^2\tr(P^{\T}AP)-2\gamma\tr(P^{\T}D)+\tr(YC_pY^{\T}),
$$
where $A$ and $D$ are as in \eqref{eq:A-and-D}. Hence OLSR-OS \eqref{eq:opt-OLSR-OS-no-intercept} is
equivalent to
\begin{equation}\label{eq:opt-OLSR-OS'}
\min_{P\in\bbO^{n\times k},\,\gamma\in\bbR}
   \Big\{g(P,\gamma):=\gamma^2\tr(P^{\T}AP)-2\gamma\tr(P^{\T}D)\Big\}.
\end{equation}
Evidentally $A\succeq 0$ in \eqref{eq:A-and-D}, but we will further assume $\rank(A)>n-k$, which is needed to ensure $\tr(P^{\T}AP)>0$ for any $P\in\bbO^{n\times k}$ and hence
$g(P,\gamma)$ is always bounded from below for $\gamma\in\bbR$.
Given $P$, $g(P,\gamma)$ over $\gamma\in\bbR$ achieves its minimum at
\begin{equation}\label{eq:P2gamma}
\gamma=\frac {\tr(P^{\T}D)}{\tr(P^{\T}AP)},
\end{equation}
substituting which into \eqref{eq:opt-OLSR-OS'}, upon considering maximizing $-g(P,\gamma)$ instead, turns \eqref{eq:opt-OLSR-OS'} equivalently into OCCA \eqref{eq:opt-OCCA}.
In other words, OCCA \eqref{eq:opt-OCCA} is OLSR-OS \eqref{eq:opt-OLSR-OS'} with scaling parameter $\gamma$ optimized out.

It is noted that any discrepancy in magnitude between $X$ and $Y$ does not influence the optimizers to
OCCA \eqref{eq:opt-OCCA} at all, which is not surprising because the above connection simply implies
 there is some optimal scaling inherently.

Upon introducing the $(2,1)$-norm regularization, we end up with OCCA21
\eqref{eq:opt-OCCA+L21}, the model of OCCA regularized by the $(2,1)$-norm.
In \cref{sec:OCCA+L21}, we will focus on the NEPv theory for OCCA21 \eqref{eq:opt-OCCA+L21} and how to numerically solve it by a
practical NEPv approach.

\subsection{Solving regularized OLSR-OS}\label{ssec:mistake-Zhang2018}
In \cite{Zhang2018}, the authors  combined OLSR-OS \eqref{eq:opt-OLSR-OS'} with
the $(2,1)$-norm regularization
\begin{equation} \label{eq:opt-OLSR-OS21}
\min_{P\in\bbO^{n\times k},\,\gamma\in\bbR}\Big\{g(P,\gamma):=\gamma^2\tr(P^{\T}AP)-2\gamma\tr(P^{\T}D)+\alpha\|P\|_{2,1}\Big\},
\end{equation}
which will be referred to as OLSR-OS21 henceforth.
Also in \cite{Zhang2018}, an alternating optimization procedure
between $\gamma$ and $P$ was designed to solve \eqref{eq:opt-OLSR-OS21}, yielding an eventual supervised
feature selection method named PEB-FS.


Unfortunately, the alternating optimization procedure is not properly done. We shall now explain.
PEB-FS  \cite{Zhang2018} solves (\ref{eq:opt-OLSR-OS21})
alternatingly
between $\gamma$ and $P$ with
\begin{align*}
    \mbox{the $\gamma$-update:} \,\,&\mbox{minimize $g(P,\gamma)$ over $\gamma$ given $P$, and} \\
    \mbox{the $P$-update:} \,\,&\mbox{minimize $g(P,\gamma)$ over $P$ given $\gamma$.}
\end{align*}
The $\gamma$-update is rather easy and has a close-form solution in \eqref{eq:P2gamma}, but the $P$-update is not and that is
where the mistake was committed. Specifically, to find the $P$-update, in \cite{Zhang2018},
the authors rewrite $g(P,\gamma)$ as follows. Let
\begin{subequations}\label{eq:g(P,gamma)-rewrite}
\begin{equation}\label{eq:g(P,gamma)-rewrite:1}
\Gamma(P)=\diag(\|P_{(1,:)}\|_2^{-1},\|P_{(2,:)}\|_2^{-1},\ldots,\|P_{(n,:)}\|_2^{-1}),
\end{equation}
and let $P_{\bot}\in\bbO^{n\times (n-k)}$ such that $Q:=[P,P_{\bot}]\in\bbO^{n\times n}$. Then we have
\begin{align}
\|P\|_{2,1}&=\tr(\Gamma(P)PP^{\T})=\tr(P^{\T}\Gamma(P)P),\label{eq:g(P,gamma)-rewrite:2} \\
g(P,\gamma)&=\tr(Q^{\T}[\gamma^2A+\alpha\Gamma(P)]Q)-2\gamma\tr(Q^{\T}R(P)),
     \label{eq:g(P,gamma)-rewrite:3}
\end{align}
where $R(P):=[D,(\gamma/2)AP_{\bot}+(\alpha/(2\gamma))\Gamma(P)P_{\bot}]\in\bbR^{n\times n}$.
Now given $\gamma$, if $\Gamma(P)$ were constant (i.e., independent of $P$), then $\tr(Q^{\T}[\gamma^2A+\alpha\Gamma(P)]Q)$
would be constant too,  and thus minimizing $g(P,\gamma)$ over $P$ would be
equivalent to maximizing $\tr(Q^{\T}R(P))$, which could be done by the SVD of $R(P)$ if $R(P)$
were also constant. Unfortunately,
both $\Gamma(P)$ and $R(P)$ are in general dependent of $P$, and hence minimizing $g(P,\gamma)$ over $P$,
given $\gamma$, cannot be simply done by the SVD of $R(P)$ evaluated at the current approximation,
but that is exactly what
the $P$-update was carried out in \cite{Zhang2018}. For that reason, the conclusion of \cite[Theorem 3.2]{Zhang2018}
that PEB-FS generates monotonically decreasing objective values is not correct. In fact, later in
\Cref{fig:convergence} it is observed that objective values by PEB-FS is not monotonically decreasing
on dataset Yale. Also in the figure, we see that the optimal objective values by OCCA-FS is far smaller than those delivered by PEB-FS
for all considered datasets.
\end{subequations}

It is noted that the $P$-update can be handled by the NEPv approach in a similar way to what we will do to
OCCA21 \eqref{eq:opt-OCCA+L21} in the next section. For that, we will simply make a remark later in \Cref{rk:PEB-FS}
but will not pursue it in this paper because OCCA21 \eqref{eq:opt-OCCA+L21} can merge both updates into one, kind of
``kill two birds with one stone!''

%
%

\section{NEPv for OCCA with the (2,1)-norm regularization}\label{sec:OCCA+L21}

Earlier, we mentioned that the role of $\alpha$ in OCCA21~\eqref{eq:opt-OCCA+L21}, and in
OLSR-OS21  \eqref{eq:opt-OLSR-OS21} for that matter, is to drive
some rows of the respective optimizers towards $0$. But owing to the fact that $P\in\bbO^{n\times k}$, $P$ has at least $k$ rows that have
nontrivial norms, i.e., not small, as guaranteed by \Cref{thm:not-small} below.

\begin{theorem}\label{thm:not-small}
Let $P\in\bbO^{n\times k}$ and rearrange $\{\|P_{(i,:)}\|_2\}_{i=1}^n$ descendingly as
$$
\|P_{(i_1,:)}\|_2\ge\|P_{(i_2,:)}\|_2\ge\cdots\ge\|P_{(i_n,:)}\|_2.
$$
Then
$$
\|P_{(i_j,:)}\|_2\ge\sqrt{\frac {k-j+1}{n-j+1}}\ge\frac 1{\sqrt{n-k+1}}\quad\mbox{for $1\le j\le k$}.
$$
\end{theorem}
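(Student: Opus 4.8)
The plan is to exploit two elementary constraints on the row norms of $P\in\bbO^{n\times k}$: a total-mass identity and a uniform upper bound. First, since $P^{\T}P=I_k$, we have
\[
\sum_{i=1}^n\|P_{(i,:)}\|_2^2=\|P\|_{\F}^2=\tr(P^{\T}P)=k .
\]
Second, each row norm is at most $1$: this follows because $PP^{\T}\in\bbR^{n\times n}$ is an orthogonal projector (it is symmetric and $(PP^{\T})^2=P(P^{\T}P)P^{\T}=PP^{\T}$), so its diagonal entries lie in $[0,1]$, and the $i$th diagonal entry of $PP^{\T}$ equals $\|P_{(i,:)}\|_2^2$. (Equivalently, $\|P_{(i,:)}\|_2=\|P^{\T}\be_i\|_2\le\|P^{\T}\|_2\|\be_i\|_2=1$ since $\|P^{\T}\|_2=\|P\|_2=\sigma_1(P)=1$.)

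Next I would run a simple averaging argument on the sorted squares. Fix $j$ with $1\le j\le k$ and abbreviate $a_\ell:=\|P_{(i_\ell,:)}\|_2^2$, so that $1\ge a_1\ge a_2\ge\cdots\ge a_n\ge 0$ and $\sum_{\ell=1}^n a_\ell=k$. Splitting the sum at index $j$, bounding each of the first $j-1$ terms by $1$ and each of the remaining $n-j+1$ terms by $a_j$, gives
\[
k=\sum_{\ell=1}^{j-1}a_\ell+\sum_{\ell=j}^n a_\ell\le (j-1)+(n-j+1)\,a_j ,
\]
hence $a_j\ge\frac{k-j+1}{n-j+1}$, and taking square roots yields the first claimed inequality $\|P_{(i_j,:)}\|_2\ge\sqrt{(k-j+1)/(n-j+1)}$.

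Finally, the chain is closed by the purely algebraic fact that $\frac{k-j+1}{n-j+1}\ge\frac{1}{n-k+1}$ for $1\le j\le k$. Writing $m:=k-j+1\ge 1$ so that $n-j+1=(n-k)+m$, this reduces after cross-multiplication to $m(n-k+1)\ge (n-k)+m$, i.e. $(m-1)(n-k)\ge 0$, which holds since $m\ge 1$ and $n\ge k$. I do not anticipate any real obstacle here; the one point worth stating carefully is the uniform bound $\|P_{(i,:)}\|_2\le 1$, for which the projector observation (or the operator-norm bound above) is the cleanest route, after which the rest is a one-line counting estimate plus elementary algebra.
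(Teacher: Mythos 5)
Your proof is correct and follows essentially the same route as the paper's: the identity $\sum_i\|P_{(i,:)}\|_2^2=k$, the bound $\|P_{(i,:)}\|_2\le 1$, and the split of the sorted sum at index $j$ giving $k\le(j-1)+(n-j+1)\|P_{(i_j,:)}\|_2^2$. You merely spell out two details the paper leaves implicit (why each row norm is at most $1$, and the algebra behind the final inequality $\tfrac{k-j+1}{n-j+1}\ge\tfrac1{n-k+1}$), which is fine.
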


\begin{proof}
For $1\le j\le k$, noticing $\|P_{(i,:)}\|_2\le 1$ for any $i$, we have
$$
k=\sum_{j=1}^n\|P_{(i_j,:)}\|_2^2
 \le (j-1)+(n-j+1)\|P_{(i_j,:)}\|_2^2,
$$
implying
$$
\|P_{(i_j,:)}\|_2^2\ge\frac {k-j+1}{n-j+1}\ge\frac 1{n-k+1},
$$
as was to be shown.
\end{proof}

\subsection{The NEPv Approach}\label{ssec:thy}
We rewrite $\|P\|_{2,1}$ in terms of matrix traces as follows:
\begin{equation}\label{eq:L21->trace}
\|P\|_{2,1}=\sum_{i=1}^n\|\be_i^{\T}P\|_2
    =\sum_{i=1}^n\sqrt{\be_i^{\T}PP^{\T}\be_i}
    =\sum_{i=1}^n\sqrt{\tr(P^{\T}\be_i\be_i^{\T}P)}\,,
\end{equation}
and, hence, for OCCA21 \eqref{eq:opt-OCCA+L21},
\begin{subequations}\label{eq:opt-OLSR+L21:-obj}
\begin{align}
f(P)&=\frac {[\tr(P^{\T}D)]^2}{\tr(P^{\T}AP)}-\alpha\sum_{i=1}^n\sqrt{\tr(P^{\T}\be_i\be_i^{\T}P)} \label{eq:opt-OLSR+L21:-obj-1} \\
    &=\phi\circ T(P), \label{eq:opt-OLSR+L21:-obj-2}
\end{align}
where, for $\bx\equiv [x_i]\in\bbR^{n+2}$,
\begin{equation}\label{eq:opt-OLSR+L21:-obj-3}
\phi(\bx)=\frac {x_2^2}{x_1}-\alpha\sum_{i=1}^n\sqrt{x_{i+2}}, \quad
T(P)=\begin{bmatrix}
       \tr(P^{\T}AP) \\
       \tr(P^{\T}D) \\
       \tr(P^{\T}\be_1\be_1^{\T}P) \\
       \vdots \\
       \tr(P^{\T}\be_n\be_n^{\T}P)
     \end{bmatrix}.
\end{equation}
\end{subequations}
Function $\phi(\bx)$ is convex in its first two components in $\bbR_{++}\times\bbR$ \cite[p.72]{bova:2004}
and componentwise convex in the rest of its components in $\bbR_+$ because
$$
\frac {\rmd^2 (-\alpha\sqrt{x})}
      {\rmd x^2}=\frac 14\alpha x^{-3/2}\ge 0.
$$
Hence $\phi(\bx)$ is convex for $\bx\in\bbR_{++}\times\bbR\times\bbR_+^n$. In  other words,
$f(P)$ is a convex composition of $n+2$ matrix trace functions:
$\tr(P^{\T}AP)$, $\tr(P^{\T}D)$, $\tr(P^{\T}\be_i\be_i^{\T}P)$ for $1\le i\le n$, all of which are
atomic functions for NEPv \cite[Theorems~7.4 and 7.5]{li:2024}.
Therefore it is natural to ask if the NEPv approach \cite[Part~II]{li:2024} can be utilized
to solve \eqref{eq:opt-OCCA+L21}.

%
%
%

In what follows, we will present the NEPv approach for the current case. Let
\begin{equation}\label{eq:h(P)}
h(P)=\frac {\tr(P^{\T}D)}{\tr(P^{\T}AP)}.
\end{equation}
We have
\begin{subequations}\label{eq:opt-OLSR+L21:KKT'}
\begin{equation}\label{eq:opt-OLSR+L21:scrH}
\scrH(P):=\frac {\partial f(P)}{\partial P}
    =2 h(P) \big[D-h(P)\,AP\big]
     -\alpha\sum_{i=1}^n\frac {\be_i\be_i^{\T}P}{\sqrt{\tr(P^{\T}\be_i\be_i^{\T}P)}},
\end{equation}
unless some $\tr(P^{\T}\be_i\be_i^{\T}P)=\|\be_i^{\T}P\|_2^2=0$ for which case $f(P)$ is not differentiable.
As a result, the first order optimality condition, also known as the KKT condition, is
\cite[section~2]{li:2024}
\begin{equation}\label{eq:opt-OLSR+L21:KKT}
\scrH(P)=P\Lambda, \quad P\in\bbO^{n\times k},\quad \Lambda=\Lambda^{\T}\in\bbR^{k\times k},
\end{equation}
\end{subequations}
which governs any KKT point $P$ that has no zero rows.

To apply the NEPv approach, we will need a symmetric matrix-valued
function that can be constructed from each individual ones for the $n+2$ matrix trace functions:
$\tr(P^{\T}AP)$, $\tr(P^{\T}D)$, $\tr(P^{\T}\be_i\be_i^{\T}P)$ for $1\le i\le n$, as atomic functions for NEPv.
According to \cite[(8.3)]{li:2024}, we can use
\begin{subequations}\label{eq:opt-OLSR+L21:NEPv'}
\begin{equation}\label{eq:opt-OLSR+L21:H(P)}
H(P):=2 h(P) \big[\big(DP^{\T}+PD^{\T})-h(P)\,A\big]
-\alpha\sum_{i=1}^n\frac {\be_i\be_i^{\T}}{\sqrt{\tr(P^{\T}\be_i\be_i^{\T}P)}}
   \in\bbR^{n\times n},
\end{equation}
whose associated NEPv is
\begin{equation}\label{eq:opt-OLSR+L21:NEPv}
H(P)\,P=P\Omega, \quad P\in\bbO^{n\times k},\quad \Omega=\Omega^{\T}\in\bbR^{k\times k}.
\end{equation}
\end{subequations}
We find that for $P\in\bbO^{n\times k}$
\begin{equation}\label{eq:H(p)-scrH(P)}
H(P)P-\scrH(P)=P\big[2h(P)\,D^{\T}P\big].
\end{equation}
The next theorem is a straightforward consequence of \cite[Theorem 6.1]{li:2024} and
\eqref{eq:H(p)-scrH(P)}.

\begin{theorem}[{\cite[Theorem 6.1]{li:2024}}]\label{thm:H(P)-eligibility}
Suppose that \eqref{eq:opt-OLSR-OS21} has no KKT point $P$ with zero rows.
$P\in\bbO^{n\times k}$ is a solution to the KKT condition \eqref{eq:opt-OLSR+L21:KKT'} if and only if
it is a solution to NEPv \eqref{eq:opt-OLSR+L21:NEPv'}
and $D^{\T}P$ is  symmetric.
\end{theorem}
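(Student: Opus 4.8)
The plan is to reduce each of the KKT system \eqref{eq:opt-OLSR+L21:KKT} and the NEPv \eqref{eq:opt-OLSR+L21:NEPv} to a ``range condition'' together with a ``symmetry condition,'' and then to match the two using the single identity \eqref{eq:H(p)-scrH(P)}. For $P\in\bbO^{n\times k}$, note first that $\scrH(P)=P\Lambda$ with $\Lambda=\Lambda^{\T}$ holds if and only if
\begin{equation*}
(I_n-PP^{\T})\scrH(P)=0
\quad\text{and}\quad
P^{\T}\scrH(P)=\big[P^{\T}\scrH(P)\big]^{\T}:
\end{equation*}
the first says precisely that $\scrH(P)$ lies in the column space of $P$, forcing $\scrH(P)=P\Lambda$ with $\Lambda=P^{\T}\scrH(P)$, while the second says this $\Lambda$ is symmetric. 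Similarly, because $H(P)=H(P)^{\T}$, the NEPv \eqref{eq:opt-OLSR+L21:NEPv} is equivalent to the single condition $(I_n-PP^{\T})H(P)P=0$, since then $H(P)P=P\Omega$ with $\Omega:=P^{\T}H(P)P$ automatically symmetric. (These elementary reformulations are the instantiation of the general principle in \cite[Theorem~6.1]{li:2024}.)

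Next I would use \eqref{eq:H(p)-scrH(P)} in the form $H(P)P=\scrH(P)+P\big[2h(P)D^{\T}P\big]$ together with the splitting $I_n=PP^{\T}+(I_n-PP^{\T})$. Premultiplying by the orthogonal projector $I_n-PP^{\T}$ annihilates the last term because $(I_n-PP^{\T})P=0$, so $(I_n-PP^{\T})H(P)P=(I_n-PP^{\T})\scrH(P)$; hence the range conditions of the two problems coincide verbatim, and $P$ solves the NEPv \eqref{eq:opt-OLSR+L21:NEPv} if and only if $(I_n-PP^{\T})\scrH(P)=0$. Premultiplying instead by $P^{\T}$ gives $P^{\T}H(P)P=P^{\T}\scrH(P)+2h(P)D^{\T}P$; the left-hand side is symmetric, so $P^{\T}\scrH(P)$ is symmetric if and only if $2h(P)D^{\T}P$ is. Combining the two pieces, $P$ solves the KKT condition \eqref{eq:opt-OLSR+L21:KKT'} if and only if it solves the NEPv \eqref{eq:opt-OLSR+L21:NEPv'} and $h(P)D^{\T}P$ is symmetric.

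It then remains to trade ``$h(P)D^{\T}P$ symmetric'' for ``$D^{\T}P$ symmetric.'' This is immediate when $h(P)=\tr(P^{\T}D)/\tr(P^{\T}AP)\ne0$, the denominator being positive under the standing assumption $\rank(A)>n-k$, and I expect this last reduction to be the only delicate point. In the degenerate case $\tr(P^{\T}D)=0$ one has $\scrH(P)=-\alpha\,\Gamma(P)P$ with $\Gamma(P)$ as in \eqref{eq:g(P,gamma)-rewrite:1}, and $2h(P)D^{\T}P$ vanishes identically, so the symmetry of $D^{\T}P$ is no longer forced by the argument above; I would try to exclude this case by combining the no-zero-rows hypothesis with the KKT identity $\Gamma(P)P=P\big(-\Lambda/\alpha\big)$ (which makes $\operatorname{range}(P)$ invariant under the positive diagonal matrix $\Gamma(P)$), or else qualify the statement to ``$D^{\T}P$ symmetric, or $\tr(P^{\T}D)=0$.'' Apart from this bookkeeping, the proof is just the projector splitting applied to \eqref{eq:H(p)-scrH(P)} and involves no further computation.
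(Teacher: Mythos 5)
Your argument is correct and is essentially the intended one: the paper gives no proof of this theorem beyond citing \cite[Theorem 6.1]{li:2024} together with the identity \eqref{eq:H(p)-scrH(P)}, and the projector splitting $I_n=PP^{\T}+(I_n-PP^{\T})$ applied to that identity is exactly the mechanism behind that citation, so your reduction to a range condition plus a symmetry condition, and the transfer of both through \eqref{eq:H(p)-scrH(P)}, is the right proof. The one substantive issue is the degenerate case $\tr(P^{\T}D)=0$ that you flag at the end, and you are right to worry: there your argument only yields symmetry of $h(P)D^{\T}P$, which is vacuous, and your first proposed repair does not close the gap. The range-invariance condition $\Gamma(P)P=P(-\Lambda/\alpha)$ is easily satisfiable without zero rows --- take all rows of $P$ of equal norm $\sqrt{k/n}$, so that $\Gamma(P)=\sqrt{n/k}\,I_n$ and $\scrH(P)=-\alpha\sqrt{n/k}\,P$ --- and such a $P$ with $\tr(P^{\T}D)=0$ is simultaneously a KKT point of \eqref{eq:opt-OLSR+L21:KKT'} and a solution of NEPv \eqref{eq:opt-OLSR+L21:NEPv'} without $D^{\T}P$ having any reason to be symmetric (for $k\ge 2$). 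The backward implication is unaffected. So the honest conclusion of your argument is the qualified statement ``$D^{\T}P$ symmetric, or $\tr(P^{\T}D)=0$''; the unqualified forward implication is a defect of the theorem as transcribed (whatever hypotheses \cite[Theorem 6.1]{li:2024} carries to exclude this case are not reproduced here), not of your reasoning, and the hypothesis actually printed --- that \eqref{eq:opt-OLSR-OS21} (presumably a slip for \eqref{eq:opt-OCCA+L21}) has no KKT point with zero rows --- has no bearing on it.
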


We restate \cite[Theorem 8.1]{li:2024} for the current case as follows.

\begin{theorem}[{\cite[Theorem 8.1]{li:2024}}]\label{thm:main-NEPv-cvx}
Consider \eqref{eq:opt-OCCA+L21} with $f(P)$ as in
\eqref{eq:opt-OLSR+L21:-obj}. Let $H(P)$ be given by \eqref{eq:opt-OLSR+L21:H(P)} and
\begin{equation}\label{eq:bbP}
\bbP:=\{P\in\bbO^{n\times k}\,:\,P^{\T}D\succeq 0\}\subseteq\bbO^{n\times k}.
\end{equation}
For $\what P\in\bbO^{n\times k}$ and $P\in\bbP$, if $P$ has no zero rows and if
\begin{equation}\label{eq:NEPv-assume}
\tr(\what P^{\T}H(P)\what P)\ge\tr(P^{\T}H(P)P)+\eta
\quad\mbox{for some $\eta\in\bbR$},
\end{equation}
then
$f(\wtd P)\ge f(P)+\eta/2$,
where $\wtd P=\what PQ\in\bbP$ with $Q$ being an orthogonal polar factor of $\what P^{\T}D$.
\end{theorem}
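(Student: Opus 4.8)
The statement is the specialization of \cite[Theorem~8.1]{li:2024} to the atomic decomposition $f=\phi\circ T$ recorded in \eqref{eq:opt-OLSR+L21:-obj}, so one option is simply to verify the hypotheses of that general theorem. I would instead give a direct argument from the convexity of $\phi$, since in this concrete setting the general machinery collapses to a single trace identity. The plan: bound $f(\wtd P)-f(P)$ below by one linear term via convexity, rewrite that term as $\tfrac12$ the gap in the quadratic form of $H(P)$ plus a harmless remainder, and then sign both pieces using \eqref{eq:NEPv-assume}, the polar-factor property of $Q$, and $P\in\bbP$.

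Concretely, first I would use that $\phi$ is convex on $\bbR_{++}\times\bbR\times\bbR_+^n$ (established just after \eqref{eq:opt-OLSR+L21:-obj-3}) and differentiable at $T(P)$ --- which is exactly what ``$P$ has no zero rows'' secures, since then every $\tr(P^{\T}\be_i\be_i^{\T}P)>0$ --- to write the first-order underestimate $f(\wtd P)=\phi(T(\wtd P))\ge\phi(T(P))+\nabla\phi(T(P))^{\T}(T(\wtd P)-T(P))$; here $T(\wtd P)$ need only lie in the closed domain of $\phi$, which it does because $\tr(\wtd P^{\T}A\wtd P)>0$ and the remaining entries are $\ge0$. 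From \eqref{eq:opt-OLSR+L21:-obj-3}, the first two components of $\nabla\phi(T(P))$ are $-h(P)^2$ and $2h(P)$, with $h(P)$ as in \eqref{eq:h(P)}, and the $(i{+}2)$th is $-\alpha/(2\sqrt{\tr(P^{\T}\be_i\be_i^{\T}P)})$.

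The crux is to reorganize this linear term using the precise form \eqref{eq:opt-OLSR+L21:H(P)} of $H(P)$. Expanding $\tr(W^{\T}H(P)W)$ for $W\in\bbO^{n\times k}$ via $\tr(W^{\T}(DP^{\T}+PD^{\T})W)=2\tr(P^{\T}WW^{\T}D)$ and using the scaling identity $h(P)\tr(P^{\T}AP)=\tr(P^{\T}D)$, a bookkeeping computation with $W=P$ and $W=\wtd P$ gives
$$\nabla\phi(T(P))^{\T}\big(T(\wtd P)-T(P)\big)=\tfrac12\big[\tr(\wtd P^{\T}H(P)\wtd P)-\tr(P^{\T}H(P)P)\big]+2h(P)\big[\tr(\wtd P^{\T}D)-\tr(P^{\T}\wtd P\wtd P^{\T}D)\big].$$
Arranging the mixed term $DP^{\T}+PD^{\T}$ so that it reproduces exactly the $x_2$-derivative of $\phi$ up to the controllable leftover $2h(P)[\tr(\wtd P^{\T}D)-\tr(P^{\T}\wtd P\wtd P^{\T}D)]$ is the step I expect to be the only genuinely delicate one.

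It then remains to sign the two brackets. For the first: $\wtd P=\what PQ$ with $Q\in\bbO^{k\times k}$, hence $\tr(\wtd P^{\T}H(P)\wtd P)=\tr(\what P^{\T}H(P)\what P)$, and \eqref{eq:NEPv-assume} bounds this bracket below by $\eta$. For the second I would exploit that $Q$ is an orthogonal polar factor of $\what P^{\T}D$: writing $\what P^{\T}D=QH$ with $H\succeq0$ its Hermitian factor gives $\wtd P^{\T}D=Q^{\T}\what P^{\T}D=H\succeq0$, which shows simultaneously that $\wtd P\in\bbP$ and that $\tr(\wtd P^{\T}D)=\tr(H)=\|\what P^{\T}D\|_{\tr}$; meanwhile, since $\|P\|_2=1$ and $\wtd P$ has orthonormal columns, $\tr(P^{\T}\wtd P\wtd P^{\T}D)=\tr\big(P^{\T}(\wtd PH)\big)\le\|\wtd PH\|_{\tr}=\|H\|_{\tr}=\|\what P^{\T}D\|_{\tr}$ by trace-norm duality. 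So the second bracket is $\ge0$, and $h(P)\ge0$ because $P\in\bbP$ forces $\tr(P^{\T}D)\ge0$ while $\tr(P^{\T}AP)>0$; thus the correction term is nonnegative. Putting these together yields $f(\wtd P)\ge f(P)+\eta/2$.
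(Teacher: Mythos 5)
Your argument is correct. Note, though, that the paper does not prove this statement at all: it is imported verbatim as \cite[Theorem~8.1]{li:2024}, so the paper's ``proof'' is a citation to the general NEPv framework for convex compositions of atomic trace functions. What you have done is unpack that general theorem in the concrete setting of \eqref{eq:opt-OLSR+L21:-obj}, and every step checks out: the gradient of $\phi$ at $T(P)$ is $(-h(P)^2,\,2h(P),\,-\alpha/(2\|\be_1^{\T}P\|_2),\ldots)$ as you say (this is where ``no zero rows'' is used); a direct expansion of $\tfrac12\tr(W^{\T}H(P)W)$ for $W=P$ and $W=\wtd P$, using $\tr(W^{\T}(DP^{\T}+PD^{\T})W)=2\tr(P^{\T}WW^{\T}D)$ and $h(P)\tr(P^{\T}AP)=\tr(P^{\T}D)$, confirms your key identity, since the $A$-terms and the $\be_i\be_i^{\T}$-terms in $H(P)$ match the corresponding gradient components exactly and only the $x_2$-slot leaves the residue $2h(P)[\tr(\wtd P^{\T}D)-\tr(P^{\T}\wtd P\wtd P^{\T}D)]$; orthogonal invariance of the trace gives $\tr(\wtd P^{\T}H(P)\wtd P)=\tr(\what P^{\T}H(P)\what P)$ so \eqref{eq:NEPv-assume} signs the half-gap by $\eta/2$; and the polar factorization $\what P^{\T}D=QH$ with $H\succeq 0$ gives both $\wtd P\in\bbP$ and $\tr(\wtd P^{\T}D)=\|\what P^{\T}D\|_{\tr}\ge\tr(P^{\T}\wtd P\wtd P^{\T}D)$ by the trace-norm/spectral-norm duality (the same fact the paper invokes as \cite[Lemma~3]{zhwb:2022} in the proof of \Cref{thm:main-NEPv-cvx-eps}(b)), while $P\in\bbP$ forces $h(P)\ge 0$. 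The one thing your write-up glosses over is worth making explicit: the first-order underestimate for the convex $\phi$ must be applied with $T(\wtd P)$ possibly on the boundary of the domain (some $\|\be_i^{\T}\wtd P\|_2$ may vanish), which is fine because $-\sqrt{\cdot}$ still satisfies the gradient inequality at $y=0$ from an interior base point, and $\tr(\wtd P^{\T}A\wtd P)>0$ is guaranteed by the standing assumption $\rank(A)>n-k$. What your direct route buys is a self-contained proof that makes visible exactly where each hypothesis ($P\in\bbP$, no zero rows, the polar factor) enters; what the paper's citation buys is brevity and the companion global-convergence theorems of \cite{li:2024} that come packaged with the general statement.
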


We have stated a seemingly beautiful theory, but there is one unsettling condition in both \Cref{thm:H(P)-eligibility,thm:main-NEPv-cvx}: $P$ has no zero rows. The condition has to be there
%
because of the singularity in the expression of $\scrH(P)$ in \eqref{eq:opt-OLSR+L21:scrH} and that of $H(P)$ in \eqref{eq:opt-OLSR+L21:NEPv'} when one or more rows of $P$ are $0$.
In order not to miss out any KKT points, we will have to look for those KKT points with one or more zero rows. This can be done, in theory, by exhausting all possibilities, e.g., looking
for KKT points $P$ with zero rows. The number of all possibilities is $2^{n-k}$ because of \Cref{thm:not-small},  making this option theoretically correct but practically infeasible.


\subsection{A practical NEPv approach}\label{ssec:practical}


The precise purpose of introducing
the (2,1)-norm regularization in \eqref{eq:opt-OCCA+L21} and \eqref{eq:opt-OLSR-OS21}
is to promote row sparsity in $P$, i.e., making some of the rows of $P$ zeros in theory. When that happens,
singularities occur, as they are exposed in \eqref{eq:opt-OLSR+L21:scrH} and \eqref{eq:opt-OLSR+L21:H(P)}.
Numerically, it is unlikely that some rows of $P$ become zeros exactly during any optimization process, but
near singularity is bound to occur and near-singularity can cause numerical ill-behavior.
Fortunately, near-singularity can be effectively monitored by checking if some $\|\be_i^{\T}P\|_2\ll 1$.
When $\|\be_i^{\T}P\|_2\ll 1$, it means that, from the feature selection point of view,
the $i$th feature is insignificant and may be deselected. In practice, this can be realized by pre-selecting a small tolerance
$\varepsilon_0$ and then regarding any rows such that
$\|\be_i^{\T}P\|_2\le\varepsilon_0$ insignificant. How small should $\varepsilon_0$ be? Since
$$
\sum_{i=1}^n\|\be_i^{\T}P\|_2^2=k
\quad\Rightarrow\quad
\frac kn\le\max_{1\le i\le n}\|\be_i^{\T}P\|_2^2\le 1.
$$
For data science applications, $\varepsilon_0=10^{-3}\sqrt{k/n}$ should be good enough.

In view of these discussions, instead of \eqref{eq:opt-OCCA+L21}, we may solve
a perturbed problem of it:
\begin{equation}\label{eq:opt-OLSR+L21:work-eps}
\max_{P\in\bbO^{n\times k}} \left\{ f_{\varepsilon_0}(P):=\frac {[\tr(P^{\T}D)]^2}{\tr(P^{\T}AP)}-\alpha\sum_{i=1}^n\sqrt{\tr(P^{\T}\be_i\be_i^{\T}P)+\varepsilon_0^2}\right\}.
\end{equation}
With the same $T(P)$ as in \eqref{eq:opt-OLSR+L21:-obj-3}, we find that
$$
f_{\varepsilon_0}(P)=\phi_{\varepsilon_0}\circ T(P)
\quad\mbox{with}\quad
\phi_{\varepsilon_0}(\bx)=\frac {x_2^2}{x_1}-\alpha\sum_{i=1}^n\sqrt{x_{i+2}+\varepsilon_0^2},
$$
Function $\phi_{\varepsilon_0}(\bx)$ is still convex for $\bx\in\bbR_{++}\times\bbR\times\bbR_+^n$.
Accordingly, we have
\begin{subequations}\label{eq:opt-OLSR+L21:KKT'-eps}
\begin{equation}\label{eq:opt-OLSR+L21:scrH-eps}
\scrH_{\varepsilon_0}(P):=\frac {\partial f_{\varepsilon_0}(P)}{\partial P}
    =2 h(P) \big[D-h(P)\,AP\big]
     -\alpha\sum_{i=1}^n\frac {\be_i\be_i^{\T}P}{\sqrt{\tr(P^{\T}\be_i\be_i^{\T}P)+\varepsilon_0^2}},
\end{equation}
where $h(P)$ is as defined  in \eqref{eq:h(P)},
and the first order optimality condition of \eqref{eq:opt-OLSR+L21:work-eps} is then given by
\cite[section~2]{li:2024}
\begin{equation}\label{eq:opt-OLSR+L21:KKT-eps}
\scrH_{\varepsilon_0}(P)=P\Lambda, \quad P\in\bbO^{n\times k},\quad \Lambda=\Lambda^{\T}\in\bbR^{k\times k}.
\end{equation}
\end{subequations}
Correspondingly, we use symmetric matrix-valued function
\begin{subequations}\label{eq:opt-OLSR+L21:NEPv'-eps}
\begin{equation}\label{eq:opt-OLSR+L21:H(P)-eps}
H_{\varepsilon_0}(P):=2 h(P) \big[\big(DP^{\T}+PD^{\T})-h(P)\,A\big]
     -\alpha\sum_{i=1}^n\frac {\be_i\be_i^{\T}}{\sqrt{\tr(P^{\T}\be_i\be_i^{\T}P)+\varepsilon_0^2}}
   \in\bbR^{n\times n},
\end{equation}
whose associated NEPv is
\begin{equation}\label{eq:opt-OLSR+L21:NEPv-eps}
H_{\varepsilon_0}(P)\,P=P\Omega, \quad P\in\bbO^{n\times k},\quad \Omega=\Omega^{\T}\in\bbR^{k\times k}.
\end{equation}
\end{subequations}
Again we find that for $P\in\bbO^{n\times k}$
\begin{equation}\label{eq:H(p)-scrH(P)-eps}
H_{\varepsilon_0}(P)P-\scrH_{\varepsilon_0}(P)=P\big[2h(P)\,D^{\T}P\big].
\end{equation}
We will also have the corresponding versions of \Cref{thm:H(P)-eligibility,thm:main-NEPv-cvx} for \eqref{eq:opt-OLSR+L21:work-eps}, without the need to
assume that $P$ has no zero rows. They are stated as follows.

\begin{algorithm}[t]
\caption{The NEPv approach for solving \eqref{eq:opt-OLSR+L21:work-eps}} \label{alg:NEPvSCF4OLSR+L21}
\begin{algorithmic}[1]
\REQUIRE $0\preceq A\in\bbR^{n\times n}$ (such that $\rank(A)>n-k$), $D\in\bbR^{n\times k}$, regularization parameter $\alpha>0$,
         $\varepsilon_0>0$, $H_{\varepsilon_0}(\cdot)$ as \eqref{eq:opt-OLSR+L21:H(P)-eps},
         and initial  $P^{(0)}\in\bbO^{n\times k}$;
\ENSURE  an approximate maximizer of \eqref{eq:opt-OLSR+L21:work-eps}. 
\STATE if $\big[P^{(0)}\big]^{\T}D\not\succeq 0$, update $P^{(0)}$ to $P^{(0)}Q_0$ where
       $Q_0=U_0V_0^{\T}$, an orthogonal polar factor of $\big[P^{(0)}\big]^{\T}D=U_0\Sigma_0 V_0$ (SVD);
\FOR{$j=0,1,\ldots$ until convergence}
    \STATE compute $H_j=H_{\varepsilon_0}(P^{(j)})\in\bbR^{n\times n}$;
    \STATE solve symmetric eigenvalue problem $H_j\what P^{(j)}=\what P^{(j)}\Omega_j$ for $\what P^{(j)}\in\bbR^{n\times k}$,
           an orthonormal basis matrix of the eigenspace associated with the first $k$ largest eigenvalues of $H_j$;
    \STATE compute the SVD $\big[\what P^{(j)}\big]^{\T}D=U_j\Sigma_j V_j$ and let $Q_j=U_jV_j^{\T}$, an orthogonal polar
           factor of $\big[\what P^{(j)}\big]^{\T}D$;
    \STATE let $P^{(j+1)}=\what P^{(j)}Q_j$;
\ENDFOR
\RETURN the last $P^{(j)}$.
\end{algorithmic}
\end{algorithm}

\begin{theorem}[{\cite[Theorem 6.1]{li:2024}}]\label{thm:H(P)-eligibility-eps}
$P\in\bbO^{n\times k}$ is a solution to the KKT condition \eqref{eq:opt-OLSR+L21:KKT'-eps} of \eqref{eq:opt-OLSR+L21:work-eps} if and only if
it is a solution to NEPv \eqref{eq:opt-OLSR+L21:NEPv'-eps}
and $D^{\T}P$ is  symmetric.
\end{theorem}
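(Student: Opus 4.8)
The plan is to mirror, almost verbatim, the proof of \Cref{thm:H(P)-eligibility} for the unperturbed problem, invoking \cite[Theorem~6.1]{li:2024} together with the identity \eqref{eq:H(p)-scrH(P)-eps}. First I would check that $f_{\varepsilon_0}$ of \eqref{eq:opt-OLSR+L21:work-eps} fits the NEPv framework of \cite{li:2024}: it is the convex composition $f_{\varepsilon_0}=\phi_{\varepsilon_0}\circ T$ with the \emph{same} vector of atomic trace functions $T(P)$ as in \eqref{eq:opt-OLSR+L21:-obj-3}, the outer map $\phi_{\varepsilon_0}$ is convex on $\bbR_{++}\times\bbR\times\bbR_+^n$ as already noted, and $T(P)$ lands in that domain for \emph{every} $P\in\bbO^{n\times k}$ because the standing assumption $\rank(A)>n-k$ makes $\tr(P^{\T}AP)>0$ throughout $\bbO^{n\times k}$. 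Consequently $H_{\varepsilon_0}(P)$ in \eqref{eq:opt-OLSR+L21:H(P)-eps} is precisely the symmetric matrix-valued function that \cite[(8.3)]{li:2024} prescribes for this composition, so \cite[Theorem~6.1]{li:2024} is applicable. A bonus of the $\varepsilon_0^2$-shift is that $f_{\varepsilon_0}$ is now differentiable everywhere on $\bbO^{n\times k}$ (each $\sqrt{x+\varepsilon_0^2}$ is smooth at $x=0$), so $\scrH_{\varepsilon_0}(P)$ and $H_{\varepsilon_0}(P)$ are everywhere well defined and the ``no zero rows'' hypothesis needed in \Cref{thm:H(P)-eligibility} can be dropped here.

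With the framework in place, the equivalence follows from \eqref{eq:H(p)-scrH(P)-eps}, which I would first confirm by the one-line computation $H_{\varepsilon_0}(P)P=\scrH_{\varepsilon_0}(P)+2h(P)\,PD^{\T}P$ (expand $H_{\varepsilon_0}(P)P$ from \eqref{eq:opt-OLSR+L21:H(P)-eps}, use $P^{\T}P=I_k$ to replace $DP^{\T}P$ by $D$, and subtract \eqref{eq:opt-OLSR+L21:scrH-eps}). Note also that $H_{\varepsilon_0}(P)$ is symmetric, hence so is $P^{\T}H_{\varepsilon_0}(P)P$ for any $P\in\bbO^{n\times k}$. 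For the ``if'' part: if $H_{\varepsilon_0}(P)P=P\Omega$ with $\Omega=\Omega^{\T}$ and $D^{\T}P$ is symmetric, then $2h(P)D^{\T}P$ is symmetric and \eqref{eq:H(p)-scrH(P)-eps} gives $\scrH_{\varepsilon_0}(P)=P\bigl(\Omega-2h(P)D^{\T}P\bigr)=P\Lambda$ with $\Lambda=\Lambda^{\T}$, i.e.\ \eqref{eq:opt-OLSR+L21:KKT-eps} holds. For the ``only if'' part: if $\scrH_{\varepsilon_0}(P)=P\Lambda$ with $\Lambda=\Lambda^{\T}$, then left-multiplying \eqref{eq:H(p)-scrH(P)-eps} by $P^{\T}$ gives $\Lambda=P^{\T}\scrH_{\varepsilon_0}(P)=P^{\T}H_{\varepsilon_0}(P)P-2h(P)D^{\T}P$; since $\Lambda$ and $P^{\T}H_{\varepsilon_0}(P)P$ are both symmetric, so is $2h(P)D^{\T}P$, whence $D^{\T}P$ is symmetric; and then \eqref{eq:H(p)-scrH(P)-eps} yields $H_{\varepsilon_0}(P)P=P\bigl(\Lambda+2h(P)D^{\T}P\bigr)=P\Omega$ with $\Omega=\Omega^{\T}$, i.e.\ \eqref{eq:opt-OLSR+L21:NEPv-eps} holds.

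The only nonroutine point — what I would flag as the place to be careful rather than a genuine obstacle — is the step ``$2h(P)D^{\T}P$ symmetric $\Rightarrow$ $D^{\T}P$ symmetric'' in the ``only if'' direction, which requires $h(P)\neq 0$, i.e.\ $\tr(P^{\T}D)\neq 0$. This is harmless in all cases of interest: \Cref{alg:NEPvSCF4OLSR+L21} keeps its iterates in $\bbP=\{P\in\bbO^{n\times k}:P^{\T}D\succeq 0\}$, on which $P^{\T}D$ is automatically symmetric, and any KKT point with positive objective (in particular any nontrivial local maximizer of \eqref{eq:opt-OLSR+L21:work-eps}) must have $\tr(P^{\T}D)\neq0$. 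If a fully caveat-free statement is desired, the condition ``$D^{\T}P$ symmetric'' can be replaced throughout by ``$2h(P)\,D^{\T}P$ symmetric'', which is exactly what the computation delivers and which coincides with the stated condition whenever $\tr(P^{\T}D)\neq0$. Apart from this, the proof is the bookkeeping already carried out for \Cref{thm:H(P)-eligibility}, now somewhat simpler because no singularities arise.
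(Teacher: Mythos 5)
Your proof is correct and follows the same route the paper intends: the paper offers no proof of \Cref{thm:H(P)-eligibility-eps} beyond citing \cite[Theorem~6.1]{li:2024} together with the identity \eqref{eq:H(p)-scrH(P)-eps}, and your computation is precisely the verification that this citation encapsulates, with the $\varepsilon_0^2$-shift removing the zero-row singularity exactly as you say. Your flagged caveat that the ``only if'' direction needs $h(P)\neq 0$ to pass from symmetry of $2h(P)\,D^{\T}P$ to symmetry of $D^{\T}P$ is a genuine subtlety the paper leaves implicit, and your proposed remedies (restricting to $\bbP$, or restating the condition as symmetry of $2h(P)\,D^{\T}P$) are sensible.
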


\begin{theorem}
\label{thm:main-NEPv-cvx-eps}
Let $H_{\varepsilon_0}(P)$ be given by \eqref{eq:opt-OLSR+L21:NEPv'-eps} and
$\bbP$ as defined in \eqref{eq:bbP}.
\begin{enumerate}[{\rm (a)}]
  \item {\rm \cite[Theorem 8.1]{li:2024}} For $\what P\in\bbO^{n\times k}$ and $P\in\bbP$,  if
        \begin{equation}\label{eq:NEPv-assume-eps}
        \tr(\what P^{\T}H_{\varepsilon_0}(P)\what P)\ge\tr(P^{\T}H_{\varepsilon_0}(P)P)+\eta
        \quad\mbox{for some $\eta\in\bbR$},
        \end{equation}
        then
        $f_{\varepsilon_0}(\wtd P)\ge f_{\varepsilon_0}(P)+\eta/2$,
        where $\wtd P=\what PQ\in\bbP$ with $Q$ being an orthogonal polar factor of $\what P^{\T}D$.
  \item Let $P_*\in\bbO^{n\times k}$ be an maximizer of \eqref{eq:opt-OLSR+L21:work-eps}. Then $P_*\in\bbP$,
        and $P_*$ satisfies NEPv \eqref{eq:opt-OLSR+L21:NEPv-eps}, and the eigenvalues of $\Omega_*:=P_*^{\T}H_{\varepsilon_0}(P_*)P_*$ are
        the $k$ largest eigenvalues of $H_{\varepsilon_0}(P_*)$.
\end{enumerate}
\end{theorem}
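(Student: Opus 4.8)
The plan is to prove part~(b) only, since part~(a) is quoted verbatim from \cite[Theorem 8.1]{li:2024}. The argument will run in three short stages: (i)~$P_*\in\bbP$; (ii)~$P_*$ solves the NEPv \eqref{eq:opt-OLSR+L21:NEPv-eps}; (iii)~the spectrum of $\Omega_*$ consists of the $k$ largest eigenvalues of $H_{\varepsilon_0}(P_*)$. A preliminary observation used throughout: because $\rank(A)>n-k$ forces $\tr(P^\T AP)>0$ for every $P\in\bbO^{n\times k}$, and because each $\varepsilon_0^2$ under a square root removes the only other possible singularity, the function $f_{\varepsilon_0}$ in \eqref{eq:opt-OLSR+L21:work-eps} is differentiable on all of $\bbO^{n\times k}$; hence any maximizer $P_*$ satisfies the first-order optimality (KKT) condition \eqref{eq:opt-OLSR+L21:KKT'-eps}, and $H_{\varepsilon_0}(P)$ is symmetric for every $P$ by inspection of \eqref{eq:opt-OLSR+L21:H(P)-eps}.

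\emph{Stage (i).} For any $W\in\bbO^{k\times k}$ one checks from $WW^\T=I_k$ that $\tr\big((P_*W)^\T A(P_*W)\big)=\tr(P_*^\T AP_*)$ and $\|\be_i^\T P_*W\|_2=\|\be_i^\T P_*\|_2$ for every $i$, so $f_{\varepsilon_0}(P_*W)$ and $f_{\varepsilon_0}(P_*)$ differ only through the factor $\big(\tr(W^\T P_*^\T D)\big)^2$. Maximality of $P_*$ therefore forces $\big(\tr(P_*^\T D)\big)^2=\max_{W\in\bbO^{k\times k}}\big(\tr(W^\T P_*^\T D)\big)^2=\|P_*^\T D\|_{\tr}^2$, i.e. $|\tr(P_*^\T D)|=\|P_*^\T D\|_{\tr}$. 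Inspecting the SVD of $P_*^\T D$ shows this equality forces $P_*^\T D\succeq0$ or $-P_*^\T D\succeq0$; since replacing $P_*$ by $-P_*$ changes neither feasibility nor the value of $f_{\varepsilon_0}$, we may take $P_*^\T D\succeq0$, i.e. $P_*\in\bbP$.

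\emph{Stages (ii)--(iii).} Since $P_*$ solves the KKT condition \eqref{eq:opt-OLSR+L21:KKT'-eps}, \Cref{thm:H(P)-eligibility-eps} shows that it solves the NEPv \eqref{eq:opt-OLSR+L21:NEPv-eps}; setting $\Omega_*:=P_*^\T H_{\varepsilon_0}(P_*)P_*$ (symmetric, as $H_{\varepsilon_0}(P_*)$ is) and using $P_*^\T P_*=I_k$, this reads $H_{\varepsilon_0}(P_*)P_*=P_*\Omega_*$, so diagonalizing $\Omega_*$ exhibits its $k$ eigenvalues as a sub-multiset of the spectrum $\lambda_1\ge\cdots\ge\lambda_n$ of $H_{\varepsilon_0}(P_*)$. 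Suppose this sub-multiset were not $\{\lambda_1,\dots,\lambda_k\}$; then $\tr(\Omega_*)<\sum_{l=1}^k\lambda_l$, and choosing $\what P\in\bbO^{n\times k}$ to be an orthonormal eigenbasis for the $k$ largest eigenvalues of $H_{\varepsilon_0}(P_*)$ gives $\tr\big(\what P^\T H_{\varepsilon_0}(P_*)\what P\big)=\sum_{l=1}^k\lambda_l=\tr\big(P_*^\T H_{\varepsilon_0}(P_*)P_*\big)+\eta$ with $\eta:=\sum_{l=1}^k\lambda_l-\tr(\Omega_*)>0$. Since $P_*\in\bbP$ by Stage~(i), part~(a) applies with this $\what P$ and gives $f_{\varepsilon_0}(\wtd P)\ge f_{\varepsilon_0}(P_*)+\eta/2>f_{\varepsilon_0}(P_*)$ for $\wtd P=\what PQ$, contradicting the maximality of $P_*$. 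Hence the eigenvalues of $\Omega_*$ are exactly the $k$ largest eigenvalues of $H_{\varepsilon_0}(P_*)$.

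The trace identities of Stage~(i) and the spectral bookkeeping of Stage~(ii) are routine. The crux is Stage~(iii): the decisive move is to feed the extremal Ky Fan trace maximizer $\what P$ into the ascent guarantee of part~(a). This is precisely where the proof converts \emph{global} (not merely local) maximality into a spectral optimality statement, and also the only place where genuine membership $P_*\in\bbP$ — not just feasibility $P_*\in\bbO^{n\times k}$ — is indispensable, since part~(a) requires its base point to lie in $\bbP$. The one other point needing care is the benign sign ambiguity flagged in Stage~(i), which is why the first conclusion is most cleanly read as ``$P_*$ may be taken in $\bbP$''.
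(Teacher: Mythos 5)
Your proof is correct, but it is substantially more self-contained than the paper's. The paper proves only the membership $P_*\in\bbP$ and then disposes of the NEPv and spectral-optimality claims in one line by citing \cite[Theorem~6.3]{li:2024}; you instead reconstruct those two claims from material already in the paper --- the NEPv property via \Cref{thm:H(P)-eligibility-eps} applied to the first-order condition \eqref{eq:opt-OLSR+L21:KKT'-eps} (which holds because $\varepsilon_0>0$ makes $f_{\varepsilon_0}$ smooth on all of $\bbO^{n\times k}$), and the top-$k$ eigenvalue property by feeding the Ky Fan maximizer $\what P$ into part~(a) and contradicting global maximality. That is exactly the right mechanism and is presumably how the cited theorem is proved; your version has the pedagogical advantage of making visible where global (not local) optimality enters. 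For the membership step the two arguments also differ slightly: the paper argues by contradiction that $P_*^{\T}D\not\succeq 0$ implies $\tr(P_*^{\T}D)<\|P_*^{\T}D\|_{\tr}=\tr([P_*Q]^{\T}D)$ and concludes $f_{\varepsilon_0}(P_*Q)>f_{\varepsilon_0}(P_*)$, whereas you maximize over all $W\in\bbO^{k\times k}$ and conclude only $|\tr(P_*^{\T}D)|=\|P_*^{\T}D\|_{\tr}$. Your caution is actually warranted: since $f_{\varepsilon_0}$ depends on $\tr(P^{\T}D)$ only through its square, a strict increase of the trace does not force a strict increase of the objective when $P_*^{\T}D\preceq 0$ is nonzero, so the paper's ``hence'' has a small gap in that sign-flipped case ($-P_*$ is then an equally good maximizer lying in $\bbP$ while $P_*$ itself does not). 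Your reading of the conclusion as ``$P_*$ may be taken in $\bbP$'' is the honest repair; aside from that benign caveat, the argument is complete.
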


\begin{proof}
As cited, item (a) is due to \cite[Theorem 8.1]{li:2024}. For item (b), if we can show that $P_*\in\bbP$ then the rest follows from
\cite[Theorem~6.3]{li:2024}. Assume, to the contrary, that $P_*\not\in\bbP$, i.e., $P_*^{\T}D\not\succeq 0$, which
implies \cite[Lemma 3]{zhwb:2022}
$\tr(P_*^{\T}D)<\|P_*^{\T}D\|_{\tr}=\tr\big([P_*Q]^{\T}D\big)$ where $Q$ is an orthogonal polar factor of $P_*^{\T}D$. Hence
$f_{\varepsilon_0}(P_*Q)>f_{\varepsilon_0}(P_*)$, a contradiction, and thus $P_*\in\bbP$.
\end{proof}

We point out that, in the proof above,
updating $P_*$ to $P_*Q$ is rather standard and popularly used lately in \cite{li:2024,luli:2024,wazl:2022a,wazl:2023,zhwb:2022,zhys:2020}.

An self-consistent-field (SCF) iteration
to solve \eqref{eq:opt-OLSR+L21:NEPv-eps} is outlined in \Cref{alg:NEPvSCF4OLSR+L21}.
A few comments regarding its implementation are in order.
\begin{enumerate}[(1)]
  \item A reasonable stopping criterion at Line 2 is
         \begin{equation}\label{eq:stop-1}
         \varepsilon_{\KKT}(P):=\frac {\big\|\scrH_{\varepsilon_0}(P)-P\Lambda_{\varepsilon_0}(P)\big\|_{\F}}
                                   {2 h(P) \big[\|D\|_{\F}+h(P)\,\|A\|_{\F}\big]
                                     +n\alpha}
               \le\epsilon,
         \end{equation}
         where $\Lambda_{\varepsilon_0}(P):=\big(P^{\T}\scrH_{\varepsilon_0}(P)+[\scrH_{\varepsilon_0}(P)]^{\T}P\big)/2$ and $\epsilon$ is a preselected tolerance.
  \item  According to  \Cref{thm:main-NEPv-cvx}, there is no need to compute the partial eigendecomposition at Line~4
         accurately up to the working precision but rather it suffices to make \eqref{eq:NEPv-assume-eps} with a relatively large $\eta>0$.
         This is helpful for overall computational efficiency  when $n$ is large and the partial eigendecomposition is computed iteratively \cite{demm:1997,li:2015,parl:1998,saad:1992}.
\end{enumerate}
Finally with \Cref{thm:main-NEPv-cvx-eps}(a), the general convergence theorems, \cite[Theorems~6.3~and~6.4]{li:2024},
apply. To save space, we omit  stating them here.

\subsection{Acceleration via LOCG}\label{sec:LOCG}
Algorithm~\ref{alg:NEPvSCF4OLSR+L21} involves solving a large eigenvalue problem if there are hugely many features in
sample data.
One option is to employ an iterative eigen-solver. Another option
is to borrow the idea of the locally optimal conjugate gradient technique (LOCG),
which draws inspiration from optimization \cite{poly:1987,taka:1965} and has been
increasingly used for linear systems and eigenvalue problems \cite{beli:2022,imlz:2016,knya:2001,li:2015,yali:2021}
and more recently in \cite{wazl:2022a} for maximizing the sum of coupled traces
and in \cite{li:2024} for general optimization on the Stiefel manifold.

Without loss of generality, let $P^{(-1)}\in\bbO^{n\times k}$ be the approximate maximizer of \eqref{eq:opt-OLSR+L21:work-eps}
from the very previous iterative step, and $P\in\bbO^{n\times k}$ the current approximate maximizer.
We are now looking for the next approximate maximizer
$P^{(1)}$, along the line of LOCG, according to
\begin{equation}\label{eq:LOCG}
P^{(1)}=\arg\max_{Y\in\bbO^{n\times k}}f_{\varepsilon_0}(Y),\,\,\mbox{s.t.}\,\, \cR(Y)\subseteq\cR([P,\scrR(P),P^{(-1)}]),
\end{equation}
where $\cR(\cdot)$ is the column subspace of a matrix, and
$\scrR(P)$ is the gradient of $f_{\varepsilon_0}(\cdot)$ at $P$ with respect to the Stiefel manifold:
\begin{equation}\label{eq:R(P)}
\scrR(P)
=\scrH_{\varepsilon_0}(P)-P\cdot\frac 12\Big[P^{\T}\scrH_{\varepsilon_0}(P)+\scrH_{\varepsilon_0}(P)^{\T}P\Big].
\end{equation}
Initially for the first iteration, we don't have $P^{(-1)}$ and
it is understood that $P^{(-1)}$ is absent from \eqref{eq:LOCG}, i.e.,
simply $\cR(Y)\subseteq\cR([P,\scrR(P)])$.

We still have to numerically solve \eqref{eq:LOCG}. For that purpose, let $W\in\bbO^{n\times m}$ be an orthonormal basis matrix of subspace
$\cR([P,\scrR(P),P^{(-1)}])$. Generically, $m=3k$ but $m<3k$ can happen.
It can be implemented by the Gram-Schmidt orthogonalization process, starting with orthogonalizing the columns of $\scrR(P)$ against $P$ since
$P\in\bbO^{n\times k}$ already. In MATLAB, to fully take advantage of its optimized functions, we simply set
$W=[\scrR(P),P^{(-1)}]$ (or $W=\scrR(P)$ for the first iteration) and then  do
\begin{equation}\label{eq:W-compute}
\framebox{
\begin{minipage}{10cm}
\tt      W=W-P*(P'*W); W=orth(W); W=W-P*(P'*W); W=orth(W);\\
      W=[P,W];
\end{minipage}
}
\end{equation}
where the first line  performs the classical Gram-Schmidt orthogonalization twice to almost ensure that
the resulting  columns of $W$ are fully orthogonal to the columns of $P$ at the end of the first line,
and {\tt orth} is a MATLAB function for orthogonalization, which uses the thin SVD. Another alternative is the thin QR
{\tt [W,$\sim$]=qr(W,0)} and is cheaper than {\tt orth}.
It is important to note that the first $k$ columns of
the final $W$ are the same as those of $P$.

Now it follows from $\cR(Y)\subseteq\cR([P,\scrR(P),P^{(-1)}])=\cR(W)$ that in \eqref{eq:LOCG}
\begin{subequations}\label{eq:LOCGsub}
\begin{equation}\label{eq:LOCGsub:Y}
Y=WZ\quad\mbox{for $Z\in\bbO^{m\times k}$}.
\end{equation}
Problem \eqref{eq:LOCG} becomes
\begin{equation}\label{eq:LOCGsub-1}
Z_{\opt}=\arg\max_{Z\in\bbO^{m\times k}} \wtd f_{\varepsilon_0}(Z),
\end{equation}
where, upon setting $\wtd A=W^{\T}AW$, $\wtd D=W^{\T}D$, and $\bw_i^{\T}=\be_i^{\T}W$,
\begin{equation}\label{eq:LOCGsub-2}
\wtd f_{\varepsilon_0}(Z):=f_{\varepsilon_0}(WZ)
   =\frac {[\tr(Z^{\T}\wtd D)]^2}{\tr(Z^{\T}\wtd AZ)}-\alpha\sum_{i=1}^n\sqrt{\tr(Z^{\T}\bw_i\bw_i^{\T}Z)+\varepsilon_0^2}\,.
\end{equation}
\end{subequations}
Finally $P^{(1)}=WZ_{\opt}$ for \eqref{eq:LOCG}.
Note that $\wtd f_{\varepsilon_0}(Z)$ takes the same form as $f_{\varepsilon_0}(P)$ in \eqref{eq:opt-OLSR+L21:work-eps} and the theory in the previous subsection
for \eqref{eq:opt-OLSR+L21:work-eps} can be extended straightforwardly.
In particular, the KKT condition is
\begin{subequations}\label{eq:KKT-LOCG}
\begin{equation}\label{eq:KKT-LOCG-1}
\wtd\scrH_{\varepsilon_0}(Z)=Z\wtd\Lambda, \quad Z\in\bbO^{n\times k},\quad \wtd\Lambda=\wtd\Lambda^{\T}\in\bbR^{k\times k},
\end{equation}
where
\begin{equation}\label{eq:KKT-LOCG-2}
\wtd\scrH_{\varepsilon_0}(Z):=\frac {\partial \wtd f(Z)}{\partial Z}
    =2 \wtd h(Z) \big[\wtd D-\wtd h(Z)\,\wtd AZ\big]
    -\alpha\sum_{i=1}^n\frac {\bw_i\bw_i^{\T}Z}{\sqrt{\tr(Z^{\T}\bw_i\bw_i^{\T}Z)+\varepsilon_0^2}},
\end{equation}
\end{subequations}
$\wtd h(Z)=\tr(Z^{\T}\wtd D)/\tr(Z^{\T}\wtd AZ)$,
and the associated NEPv is
\begin{subequations}\label{eq:NEPv-LOCG}
\begin{equation}\label{eq:NEPv-LOCG-1}
\wtd H_{\varepsilon_0}(Z)\,Z=Z\wtd\Omega, \quad Z\in\bbO^{m\times k},\quad \wtd\Omega=\wtd\Omega^{\T}\in\bbR^{k\times k},
\end{equation}
where
\begin{equation}\label{eq:NEPv-LOCG-2}
\wtd H_{\varepsilon_0}(Z):=2 \wtd h(Z) \big[\big(\wtd DZ^{\T}+Z\wtd D^{\T})-\wtd h(Z)\,\wtd A\big]
    -\alpha\sum_{i=1}^n\frac {\bw_i\bw_i^{\T}}{\sqrt{\tr(Z^{\T}\bw_i\bw_i^{\T}Z)+\varepsilon_0^2}}
   \in\bbR^{m\times m},
\end{equation}
\end{subequations}
a much smaller matrix.
There are corresponding versions of both \Cref{thm:H(P)-eligibility-eps,{thm:main-NEPv-cvx-eps}} and, in principle,
\Cref{alg:NEPvSCF4OLSR+L21} can be used to solve NEPv~\eqref{eq:NEPv-LOCG}.
\Cref{alg:NEPvLOCG} outlines an accelerating version of \Cref{alg:NEPvSCF4OLSR+L21}.

\begin{algorithm}[t]
\caption{The LOCG-accelerated NEPv approach for solving \eqref{eq:opt-OLSR+L21:work-eps}}
\label{alg:NEPvLOCG}
\begin{algorithmic}[1]
\REQUIRE $0\preceq A\in\bbR^{n\times n}$ (such that $\rank(A)>n-k$), $D\in\bbR^{n\times k}$, regularization parameter $\alpha>0$,
         $\varepsilon_0>0$,
         and initial  $P^{(0)}\in\bbO^{n\times k}$;
\ENSURE  an approximate maximizer of \eqref{eq:opt-OLSR+L21:work-eps}.
\STATE if $\big[P^{(0)}\big]^{\T}D\not\succeq 0$, update $P^{(0)}$ to $P^{(0)}Q_0$ where
       $Q_0=U_0V_0^{\T}$, an orthogonal polar factor of $\big[P^{(0)}\big]^{\T}D=U_0\Sigma_0 V_0$ (SVD);
\STATE $P^{(-1)}=[\,]$; \% null matrix
\FOR{$j=0,1,\ldots$ until convergence}
    \STATE compute $W\in\bbO^{n\times m}$ such that $\cR(W)=\cR(\big[P^{(j)},\scrR(P^{(j)}),P^{(j-1)}\big])$ as in \eqref{eq:W-compute}, where
           $\scrR(P^{(j)})$ is calculated according to \eqref{eq:R(P)};
    \STATE solve \eqref{eq:LOCGsub-1} for $Z_{\opt}$ by \Cref{alg:NEPvSCF4OLSR+L21} with
           inputs $\wtd A$, $\wtd D$, $\wtd H_{\varepsilon_0}(\cdot)$ in \eqref{eq:NEPv-LOCG-2}, initially $Z^{(0)}$
           being the first $k$ columns of $I_m$;
    \STATE $P^{(j+1)}=WZ_{\opt}$;
\ENDFOR
\RETURN the last $P^{(j)}$.
\end{algorithmic}
\end{algorithm}

\begin{remark}\label{rk:SCF4npd+LOCG}
There are a few  comments in order, regarding \Cref{alg:NEPvLOCG}.
\begin{enumerate}[(i)]
  \item The stopping criterion \eqref{eq:stop-1} can be used at Line 3;
  \item It is important to compute $W$ at Line~4 in such a way, as explained moments ago in
        \eqref{eq:W-compute}, that its first $k$
        columns are exactly the same as those of $P^{(j)}$.
        This is because as $P^{(j)}$ converges, $P^{(j+1)}$ changes little from $P^{(j)}$ and hence $Z_{\opt}$
        is increasingly close to the first $k$ columns of $I_m$. This explains the choice of $Z^{(0)}$
        at Line~5.
  \item At Line 5, some saving can be achieved by reusing qualities that are already computed. For example, we may use
        $$
        AP^{(j+1)}=(AW)Z_{\opt}, \quad [P^{(j+1)}]^{\T}AP^{(j+1)}=Z_{\opt}^{\T}(W^{\T}AW)Z_{\opt}
        $$
        to compute
        the next $AP^{(j+1)}$ and $[P^{(j+1)}]^{\T}AP^{(j+1)}$ at  costs $O(nk^2)$ and $O(k^3)$, respectively,
        instead of $O(n^2k)$ by reusing $AW$ and $W^{\T}AW$.
  \item An area of improvement is to solve \eqref{eq:LOCGsub-1} with an accuracy, comparable but fractionally better than the
        current $P^{(j)}$ as an approximate solution of \eqref{eq:opt-OLSR+L21:work-eps}.
        Specifically, if we use
        \eqref{eq:stop-1} at Line~3 here to stop the for-loop: Lines 3--7, with tolerance $\epsilon$, then instead of using the same
        $\epsilon$ for \Cref{alg:NEPvSCF4OLSR+L21} at its Line~2,
        we can use a fraction, say $1/8$,
        of $\varepsilon_{\KKT}(P)$ evaluated at the current approximation $P=P^{(j)}$ as stopping tolerance within \Cref{alg:NEPvSCF4OLSR+L21}.
\end{enumerate}
\end{remark}

%


\begin{remark}\label{rk:PEB-FS}
In subsection~\ref{ssec:mistake-Zhang2018}, we outlined how OLSR-OS21~\eqref{eq:opt-OLSR-OS21} is
alternatingly solved in \cite{Zhang2018} and explained a mistake in its $P$-update calculation, causing
inferior performance by PEB-FS in the next section. We point out that the $P$-update can be
handled analogously by the NEPv approach we laid out in this section upon noticing  that
\eqref{eq:opt-OLSR-OS21} is equivalent to
\begin{equation} \label{eq:opt-OLSR-OS21'}
\max_{P\in\bbO^{n\times k},\,\gamma\in\bbR}
  \Big\{\tilde f(P,\gamma):=-\gamma^2\tr(P^{\T}AP)+2\gamma\tr(P^{\T}D)-\alpha\|P\|_{2,1}\Big\},
\end{equation}
and, with the same $T(\cdot)$ as in \eqref{eq:opt-OLSR+L21:-obj-3},
$
\tilde f(P,\gamma)=\tilde\phi\circ T(P),
$
where
$$
\tilde\phi(\bx;\gamma)=-\gamma^2 x_1+2\gamma x_2 -\alpha\sum_{i=1}^n\sqrt{x_{i+2}}
\quad
\mbox{for $\bx=[x_i]\in\bbR^{n+2}$}.
$$
It can be seen that, given $\gamma$,
$\tilde\phi(\bx;\gamma)$ is convex for $\bx\in\bbR^2\times\bbR_+^n$.
\end{remark}

\section{Numerical Experiments}\label{sec:egs}
As we commented earlier, the role of the $(2,1)$-norm regularization is to induce row sparsity in an optimizer $P$
to the optimization problem \eqref{eq:opt-OCCA+L21} or  \eqref{eq:opt-OLSR-OS21}.
In practice, often, as we do in \cref{sec:OCCA+L21}, $\|P\|_{2,1}$ is perturbed slightly to avoid singularity numerically.
It is done by regularizing the $2$-norms of the rows of $P$
as in subsection~\ref{ssec:practical},
where it is suggested to use $\varepsilon_0=10^{-3}\sqrt{k/n}$.
At the end of computations, an approximate optimizer $P$ is obtained, and any rows of $P$ such that $\|\be_i^{\T}P\|_2\le\varepsilon_0$
is regarded as $0$ numerically for the purpose of feature selection. Practically, the norms of the rows of $P$ are calculated and sorted descendingly, and we then select the $q$ features corresponding to the $q$ largest
row norms of $P$, where $q$ is pre-chosen. Theoretically.
it only makes sense to use $q\ge k$ due to \Cref{thm:not-small}, and yet
practitioners may still select fewer than $k$ features regardless, as in \cite{Zhang2018}.
We shall follow this practice in our experiments, too.


\subsection{Experimental setting}\label{ssec:setting}
Six benchmark datasets are used in our experiments to demonstrate the effectiveness of the proposed feature selection method. They are COIL20\footnote{http://www.cs.columbia.edu/CAVE/software/softlib/coil-20.php},
COIL100\footnote{http://www.cs.columbia.edu/CAVE/software/softlib/coil-100.php},
USPS\footnote{https://www.csie.ntu.edu.tw/$\sim$cjlin/libsvmtools/datasets/multiclass.html\#usps},
Yale\footnote{http://www.cad.zju.edu.cn/home/dengcai/Data/Yale/Yale\_32x32.mat}, YaleB\footnote{http://www.cad.zju.edu.cn/home/dengcai/Data/YaleB/YaleB\_32x32.mat}, and AR\footnote{http://www.cat.uab.cat/Public/Publications/1998/MaB1998/CVCReport24.pdf}, all publicly available online. Their detailed information are summarized in Table~\ref{tab:data}. As our goal  is to rank the importance of input features, we compare our proposed method, which will name OCCA-FS,  with three existing methods:
T-test \cite{Montgomery2007},
CRFS \cite{He2012},
and PEB-FS \cite{Zhang2018},
in terms of classification accuracy.

\begin{table}[h]
	\caption{Statistics of six datasets.} \label{tab:data}
	\centering
	\begin{tabular}{l|cccccc}
		\hline
		 & COIL20 & COIL100 & USPS & Yale & AR & YaleB\\\hline
		number of data samples ($p$) & 1440 & 7200 & 9298 & 165 & 840 & 2414\\
		number of features ($n$) & 1024 & 1024 & 256 & 1024 & 768 & 1024\\
		number of class labels ($k$) & 20 & 100 & 10 & 15  & 120 & 38\\\hline
	\end{tabular}
\end{table}

Our experiments are carried as follows. For each  dataset, we first split it randomly into two subsets of 60\%
for training and 40\% for testing, and then run each compared method on the training subset to generate
a ranking of the input features, finally train a one-nearest neighbor classifier on the training subset
with the $q$ top-ranked features and calculate the classification accuracy on the testing subset with the $q$ top-ranked features. We let $q$ vary in $[10, 20, 30 ,40, 50]$ as in \cite{Zhang2018}. The same experiment is repeated ten times with different random splitting  of 60\% for training and 40\% for testing.
We report the average accuracy with standard deviation by each method in \Cref{tab:accuracy}.

In what follows, we will compare the effectiveness of feature ranking by the four methods through
classification accuracy in subsection~\ref{sec:classifcation}, and then evaluate the optimization solver for solving \eqref{eq:opt-OLSR-OS21}
in PEB-FS \cite{Zhang2018} and our NEPv solver for \eqref{eq:opt-OCCA+L21} in OCCA-FS, respectively,
in terms of empirical convergence, in subsection~\ref{sec:convergence}, and finally in subsection~\ref{ssec:para-sensitivity},
we will investigate the parameter sensitivity analysis of our OCCA-FS
with respect to hyper-parameter $\alpha$.

\subsection{Classification performance} \label{sec:classifcation}
In this subsection, we evaluate four feature selection methods:
T-test \cite{Montgomery2007}, CRFS \cite{He2012}, PEB-FS \cite{Zhang2018} and our newly developed OCCA-FS,
in terms of classification accuracy. For each method on a dataset,
upon each random splitting into two subsets: 60\% for training and 40\% for testing, it first uses
the training subset to generate a feature ranking for the dataset, and then a classifier on the $q$ top-ranked features
is trained, and finally, the classifier is tested on the testing subset. We report in Table~\ref{tab:accuracy} the average accuracy with standard deviation by all methods.
%
We have the following observations:
\begin{enumerate}[(i)]
  \item  OCCA-FS significantly outperforms the others on three datasets COIL1000, USPS and AR. OCCA-FS is competitive to T-test and CRFS on COIL20 and Yale.  When a small number of the top-ranked features are used, OCCA-FS shows better performance than others. This is important because, conceivably,
      for the same performance, the smaller the number of selected features the more preferable.
  \item CRFS is competitive to OCCA-FS on two of the six datasets.  CRFS works better than others on YaleB and marginally better than OCCA-FS on COIL20.
  \item OCCA-FS significantly outperforms PEB-FS on all six datasets for five different $q$, the number of selected features, except for USPS for the case of 10 selected features, despite that both methods are essentially based
      on the same mathematical principle because \eqref{eq:opt-OCCA+L21}
can be viewed as the one obtained from \eqref{eq:opt-OLSR-OS21} by optimizing scaling parameter $\gamma$ out.
\end{enumerate}
To the last observation, we believe, it is  due to a  mistake in the optimization procedure in PEB-FS \cite{Zhang2018},
as we previously explained in subsection~\ref{ssec:mistake-Zhang2018}; see also subsection~\ref{ssec:para-sensitivity} below.

\begin{table}[t]
	\caption{\footnotesize Average accuracy  with standard deviation
          over 10 random splitting of 60\% for training and 40\% for testing as the number $q$ of selected features varies
          from 10 to 50. The best accuracies are  in {\bf bold}.} \label{tab:accuracy}
\begin{scriptsize}
	\begin{tabular}{clcccc}
		\hline
		 $q$ features& dataset& T-test & CRFS & PEB-FS & OCCA-FS \\\hline
		\multirow{ 6}{*}{10} & COIL20 & 0.7189 $\pm$ 0.0424 & 0.7128 $\pm$ 0.0484 & 0.3639 $\pm$ 0.0208 & \textbf{0.8521 $\pm$ 0.0310}\\
		& COIL100 & 0.1752 $\pm$ 0.0438 & 0.3450 $\pm$ 0.0674 & 0.2255 $\pm$ 0.0221 & \textbf{0.4824 $\pm$ 0.1092}\\
		& USPS & 0.6831 $\pm$ 0.0365 & 0.7303 $\pm$ 0.0198 & \textbf{0.7512 $\pm$ 0.0344} & 0.6998 $\pm$ 0.0587\\
		& Yale & \textbf{0.4379 $\pm$ 0.0924} & 0.3879 $\pm$ 0.0535 & 0.3197 $\pm$ 0.0471 & 0.3970 $\pm$ 0.0748\\
		& AR & 0.2866 $\pm$ 0.0688 & 0.1899 $\pm$ 0.0330 & 0.2607 $\pm$ 0.0379 & \textbf{0.3482 $\pm$ 0.0347}\\
		& YaleB & 0.4366 $\pm$ 0.0264 & \textbf{0.5041 $\pm$ 0.0374} & 0.3879 $\pm$ 0.0611 & 0.4715 $\pm$ 0.0431\\
		\hline \hline
		\multirow{ 6}{*}{20} & COIL20 & 0.8443 $\pm$ 0.0480 & 0.9196 $\pm$ 0.0296 & 0.4188 $\pm$ 0.0308 & \textbf{0.9453 $\pm$ 0.0279}\\
		& COIL100 & 0.3387 $\pm$ 0.0504 & 0.5662 $\pm$ 0.0711 & 0.3145 $\pm$ 0.0185 & \textbf{0.6460 $\pm$ 0.1038}\\
		& USPS & 0.7730 $\pm$ 0.0058 & 0.8732 $\pm$ 0.0197 & 0.8765 $\pm$ 0.0152 & \textbf{0.8828 $\pm$ 0.0218}\\
		& Yale & \textbf{0.4909 $\pm$ 0.0806} & 0.4167 $\pm$ 0.0501 & 0.3894 $\pm$ 0.0452 & 0.4409 $\pm$ 0.0578\\
		& AR & 0.3563 $\pm$ 0.0588 & 0.2979 $\pm$ 0.0334 & 0.3086 $\pm$ 0.0217 & \textbf{0.4268 $\pm$ 0.0259}\\
		& YaleB & 0.5576 $\pm$ 0.0241 & \textbf{0.6345 $\pm$ 0.0251} & 0.5489 $\pm$ 0.0463 & 0.5503 $\pm$ 0.0439\\
		\hline \hline
		\multirow{ 6}{*}{30} & COIL20 & 0.9012 $\pm$ 0.0202 & \textbf{0.9648 $\pm$ 0.0116} & 0.9521 $\pm$ 0.0418 & 0.9630 $\pm$ 0.0130\\
		& COIL100 & 0.4517 $\pm$ 0.0351 & 0.6885 $\pm$ 0.0270 & 0.3636 $\pm$ 0.0206 & \textbf{0.7768 $\pm$ 0.0424}\\
		& USPS & 0.8532 $\pm$ 0.0031 & 0.9205 $\pm$ 0.0070 & 0.9216 $\pm$ 0.0075 & \textbf{0.9343 $\pm$ 0.0123}\\
		& Yale & \textbf{0.5000 $\pm$ 0.1010} & 0.4288 $\pm$ 0.0411 & 0.4273 $\pm$ 0.0301 & 0.4803 $\pm$ 0.0441\\
		& AR & 0.3836 $\pm$ 0.0416 & 0.3292 $\pm$ 0.0247 & 0.3461 $\pm$ 0.0278 & \textbf{0.4473 $\pm$ 0.0378}\\
		& YaleB & 0.6083 $\pm$ 0.0221 & \textbf{0.7086 $\pm$ 0.0194} & 0.5982 $\pm$ 0.0258 & 0.5968 $\pm$ 0.0355\\
		\hline \hline
		\multirow{ 6}{*}{40} & COIL20 & 0.9266 $\pm$ 0.0160 & \textbf{0.9753 $\pm$ 0.0100} & 0.9741 $\pm$ 0.0137 & 0.9734 $\pm$ 0.0064\\
		& COIL100 & 0.5354 $\pm$ 0.0288 & 0.7326 $\pm$ 0.0158 & 0.3960 $\pm$ 0.0109 & \textbf{0.8406 $\pm$ 0.0191}\\
		& USPS & 0.8926 $\pm$ 0.0055 & 0.9387 $\pm$ 0.0039 & 0.9398 $\pm$ 0.0025 & \textbf{0.9547 $\pm$ 0.0049}\\
		& Yale & \textbf{0.5136 $\pm$ 0.0902} & 0.4576 $\pm$ 0.0478 & 0.4515 $\pm$ 0.0473 & 0.5015 $\pm$ 0.0454\\
		& AR & 0.4161 $\pm$ 0.0576 & 0.3735 $\pm$ 0.0253 & 0.3836 $\pm$ 0.0185 & \textbf{0.4524 $\pm$ 0.0450}\\
		& YaleB & 0.6424 $\pm$ 0.0216 & \textbf{0.7581 $\pm$ 0.0205} & 0.5262 $\pm$ 0.0282 & 0.6445 $\pm$ 0.0287\\
		\hline \hline
		\multirow{ 6}{*}{50} & COIL20 & 0.9472 $\pm$ 0.0126 & \textbf{0.9807 $\pm$ 0.0090} & 0.8694 $\pm$ 0.0125 & 0.9793 $\pm$ 0.0097\\
		& COIL100 & 0.5849 $\pm$ 0.0252 & 0.7570 $\pm$ 0.0233 & 0.4053 $\pm$ 0.0069 & \textbf{0.8835 $\pm$ 0.0219}\\
		& USPS & 0.9176 $\pm$ 0.0047 & 0.9489 $\pm$ 0.0026 & 0.9476 $\pm$ 0.0044 & \textbf{0.9609 $\pm$ 0.0032}\\
		& Yale & \textbf{0.5379 $\pm$ 0.0719} & 0.4455 $\pm$ 0.0590 & 0.4545 $\pm$ 0.0258 & 0.4955 $\pm$ 0.0378\\
		& AR & 0.4354 $\pm$ 0.0641 & 0.3884 $\pm$ 0.0329 & 0.4039 $\pm$ 0.0125 & \textbf{0.4628 $\pm$ 0.0469}\\
		& YaleB & 0.6500 $\pm$ 0.0156 & \textbf{0.7780 $\pm$ 0.0167} & 0.5428 $\pm$ 0.0216 & 0.6697 $\pm$ 0.0234\\
		\hline
	\end{tabular}
\end{scriptsize}
\end{table}

\subsection{Empirical illustration on convergence by PEB-FS and OCCA-FS} \label{sec:convergence}
\Cref{tab:accuracy} clearly shows that
our  OCCA-FS outperforms PEB-FS on all six datasets, and yet the two methods are essentially based on
the same mathematical principle but solved differently. Previously, we pointed out a mistake
in the optimization procedure in \cite{Zhang2018} and argued that should have a lot to do with
the inferior performance by PEB-FS. In this subsection, we will numerically demonstrate
how objective value moves during the iterative processes by the our NEPv approach  in \cref{sec:OCCA+L21}
and by the alternating solver in PEB-FS \cite{Zhang2018}.
%
Specifically, Figure~\ref{fig:convergence} plots one of the ten repeated runs for $\alpha=0.01$.
We observe that
\begin{enumerate}[(i)]
  \item OCCA-FS always ends up with a better optimizer than PEB-FS in terms of objective value  for all six datasets;
  \item OCCA-FS enjoys monotonic convergence in objective value as guaranteed by
        \cite[Theorems~6.3 and 6.4]{li:2024}; however,
        PEB-FS behaves erroneously for dataset Yale, contradicting \cite[Theorem 3.2]{Zhang2018} which claims
        monotonicity in objective value by PEB-FS.
\end{enumerate}
These observations show that our OCCA-FS embeds a robust optimization solver
that is globally convergent, while PEB-FS \cite{Zhang2018} does not.

\begin{figure}[!ht]
	\centering
	\begin{tabular}{ccc}
		\hline
		{\footnotesize COIL20} & {\footnotesize COIL100} & {\footnotesize USPS} \\
		\includegraphics[width=0.28 \textwidth]{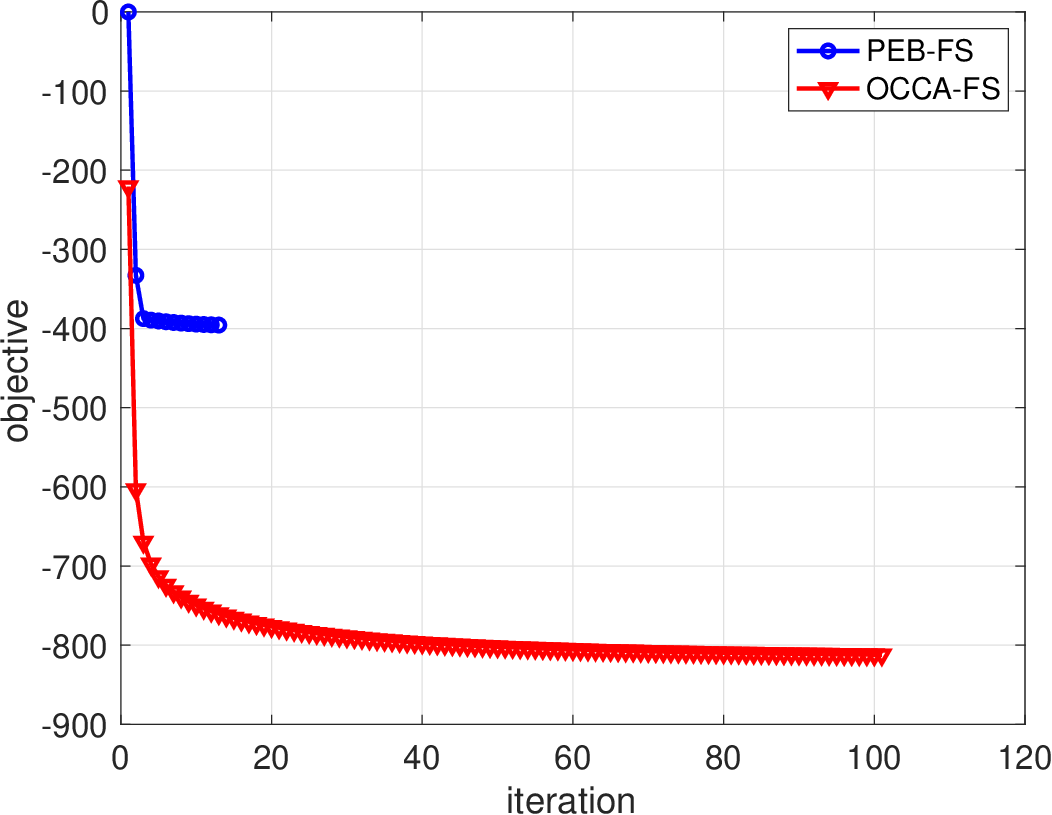}
          &	\includegraphics[width=0.28 \textwidth]{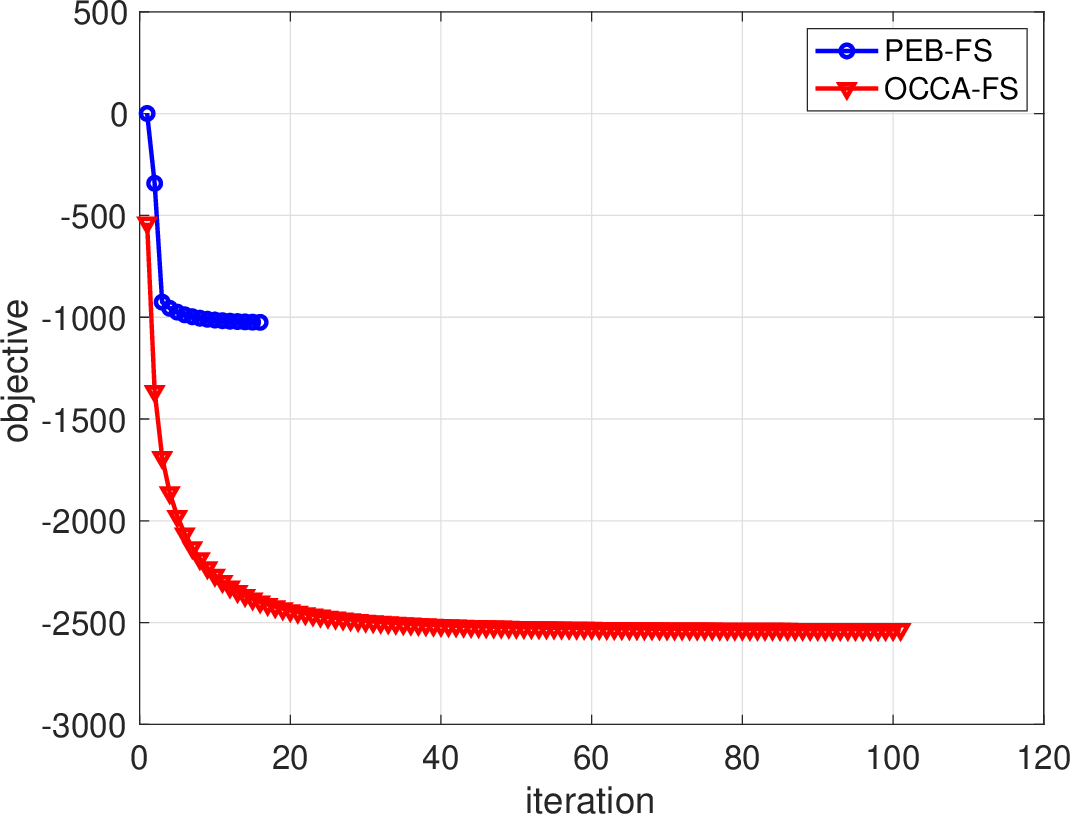}
          & \includegraphics[width=0.28 \textwidth]{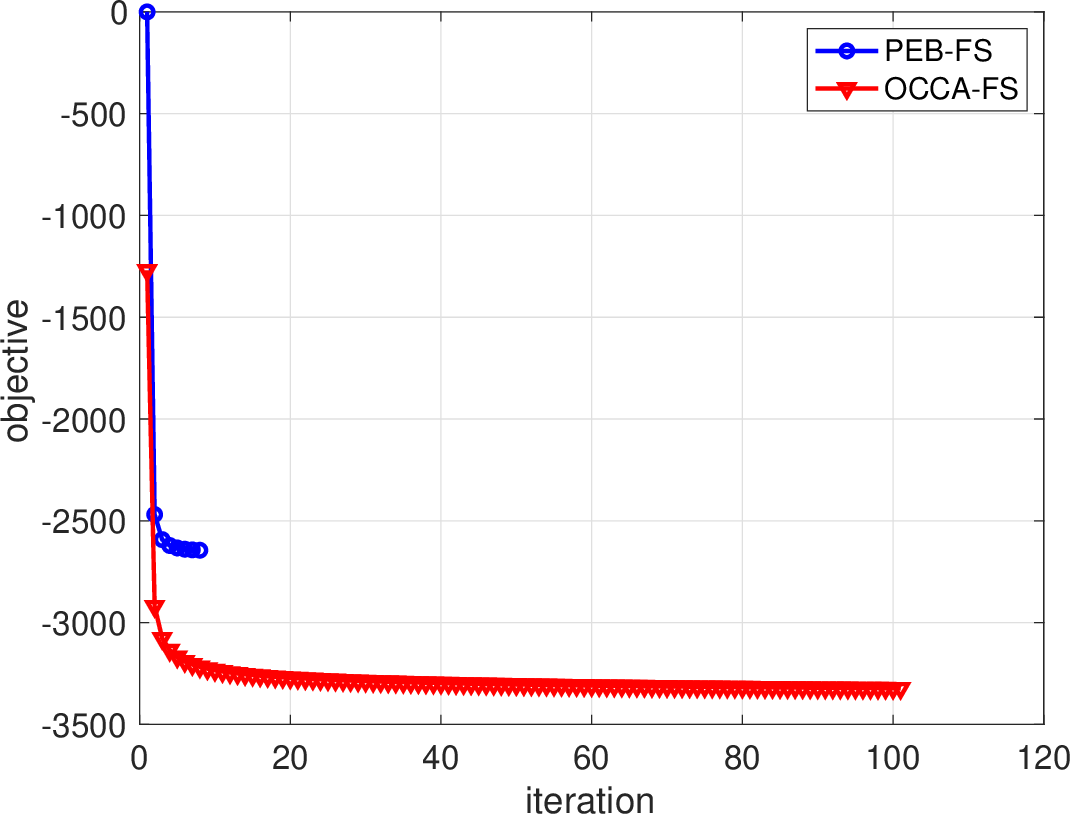} \\ \hline
		 {\footnotesize Yale} & {\footnotesize AR} & {\footnotesize YaleB} \\
		\includegraphics[width=0.28 \textwidth]{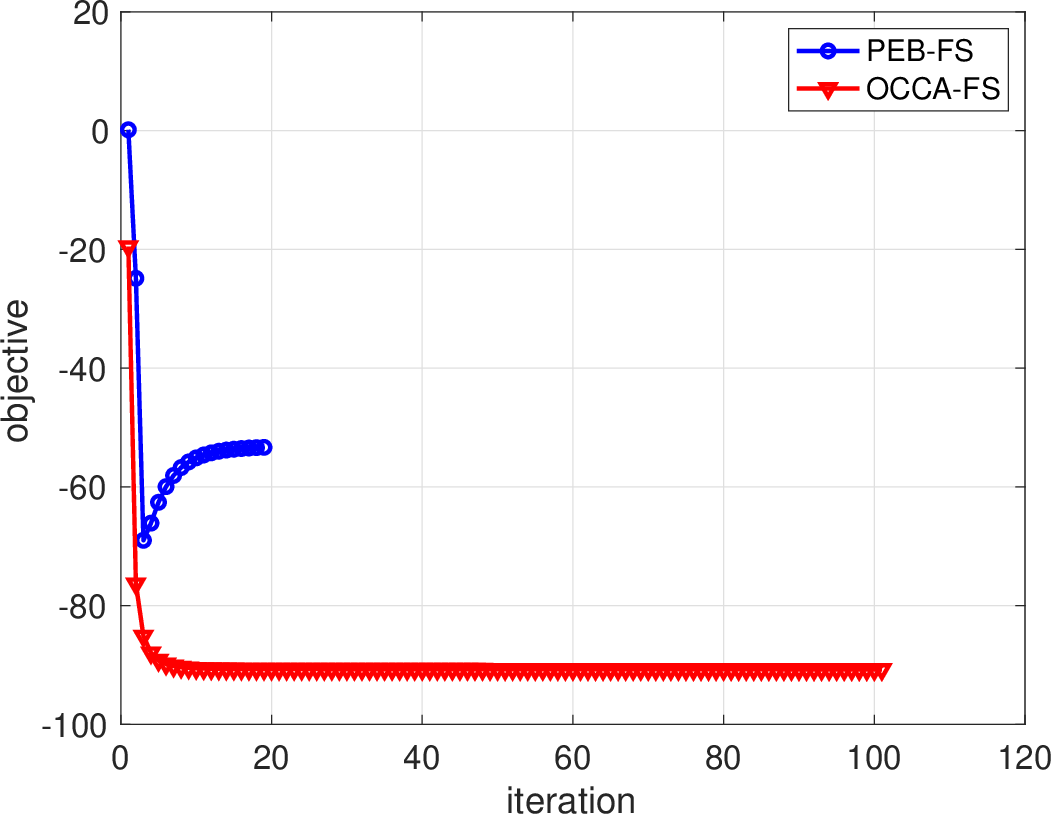}
          & \includegraphics[width=0.28 \textwidth]{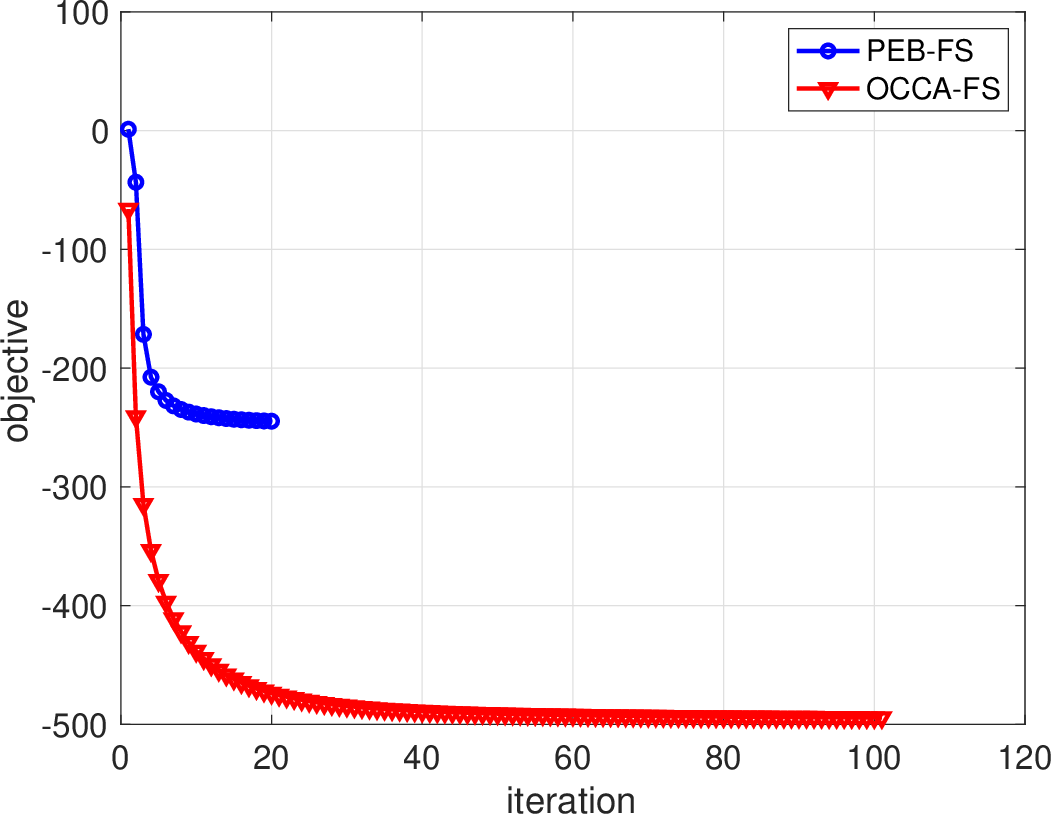}
          & \includegraphics[width=0.28 \textwidth]{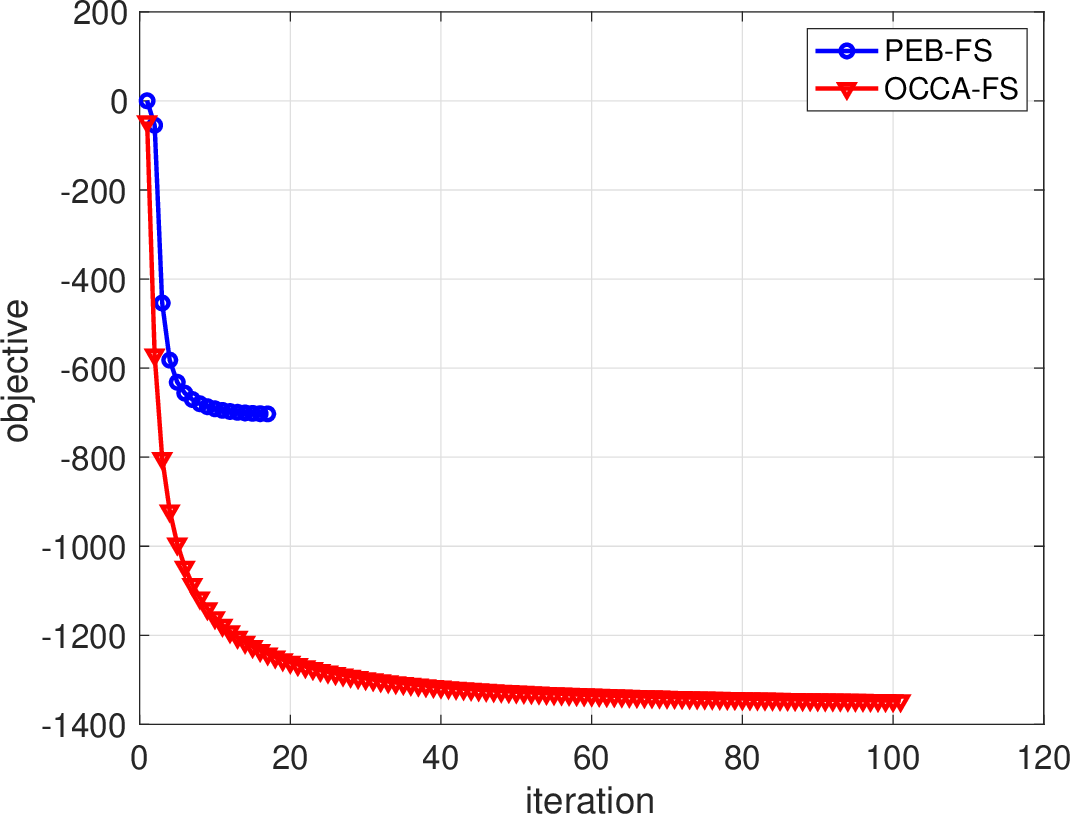} \\
          \hline
	\end{tabular}
	\caption{\footnotesize Behavior of objective values during optimization processes in OCCA-FS and PEB-FS.
     In terms of optimization, eventually, the smaller the objective value is, the more superior the optimization solver will be.
     For dataset Yale,
     PEB-FS does not produce monotonically decreasing objective values as claimed by \cite[Theorem 3.2]{Zhang2018}.} \label{fig:convergence}
\end{figure}


\subsection{Parameter sensitivity analysis}\label{ssec:para-sensitivity}
Both mathematical models \eqref{eq:opt-OCCA+L21} and \eqref{eq:opt-OLSR-OS21} contain a hyperparmeter $\alpha$, which is used to control the magnitudes of the 2-norm of the rows in optimizer $P$ so as to rank input features. In order to analyze its sensitivity, we
perform a sensitivity analysis based on evaluating the resulting classifiers on varying the number of selected features.
Hence, besides $\alpha$, the number of selected features is considered as another hyperparemter for this analysis. Specifically, we run our OCCA-FS with $\alpha \in [0.01, 0.05, 0.1, 1, 10, 100]$ and the number of selected features in $[10, 20, 30, 40, 50]$.
The average accuracy by our OCCA-FS over $10$ repeated experiments with respect to $\alpha$ and the number of selected features is shown in Figure~\ref{fig:parameter}. For a fixed number of selected features, the smaller $\alpha$ is, the worse the testing accuracy is on COIL20, YaleB, and COIL100, but the trend is different on USPS, AR, and Yale.
For a small number of selected features, the accuracy is sensitive to $\alpha$, depending on the datasets. However, when the number of selected features become large, the accuracy is less sensitive to $\alpha$.

\begin{figure}[!ht]
	\centering
	\begin{tabular}{ccc}
		\hline
		{\footnotesize COIL20} & {\footnotesize COIL100} & {\footnotesize USPS}\\
		\includegraphics[width=0.28 \textwidth]{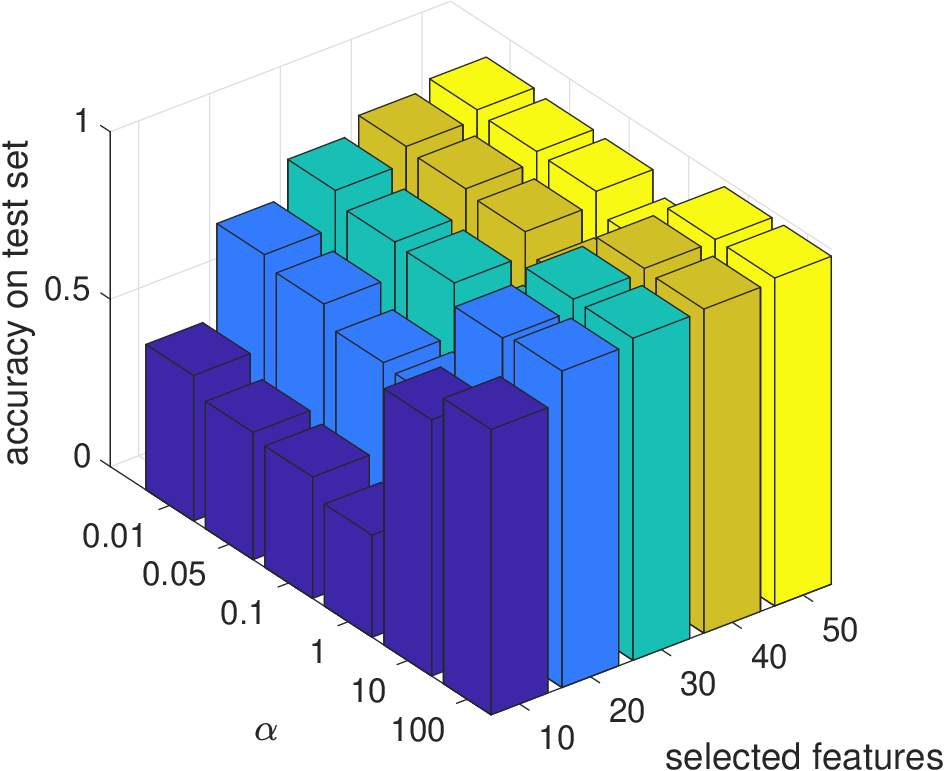}
          & \includegraphics[width=0.28 \textwidth]{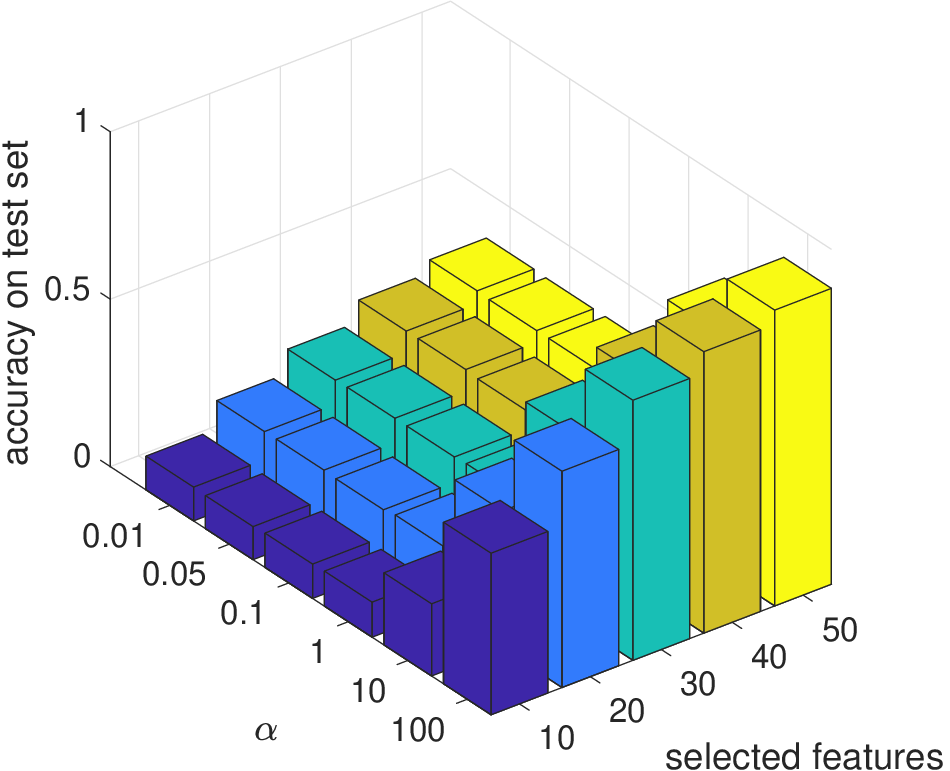}
          & \includegraphics[width=0.28 \textwidth]{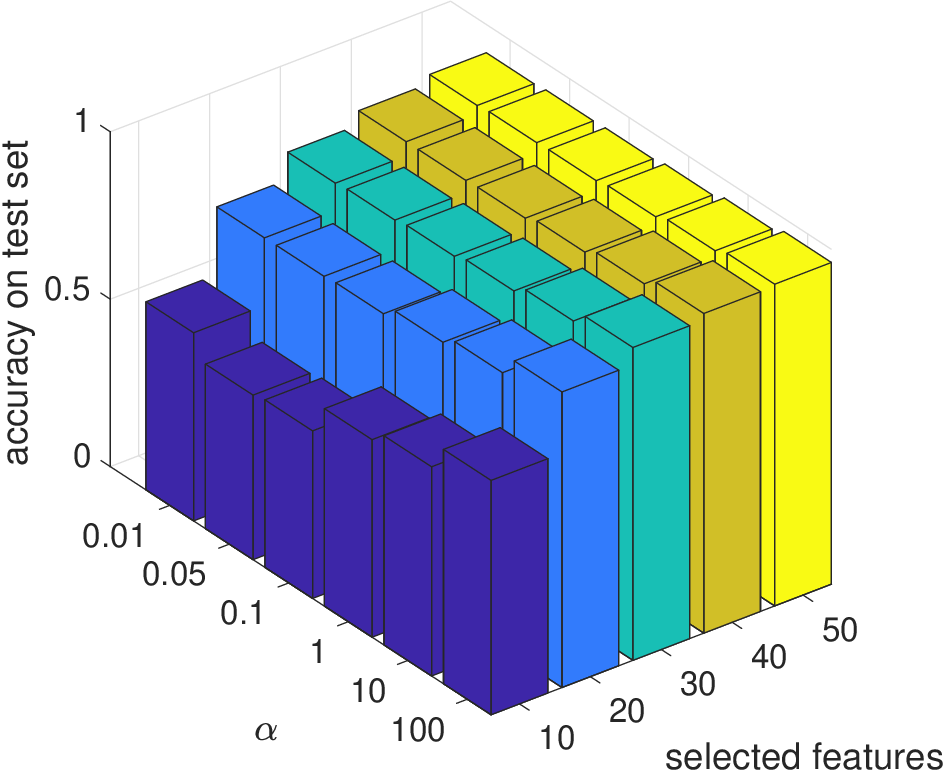} \\ \hline
		{\footnotesize Yale} & {\footnotesize 	AR} & {\footnotesize YaleB} \\
		\includegraphics[width=0.28 \textwidth]{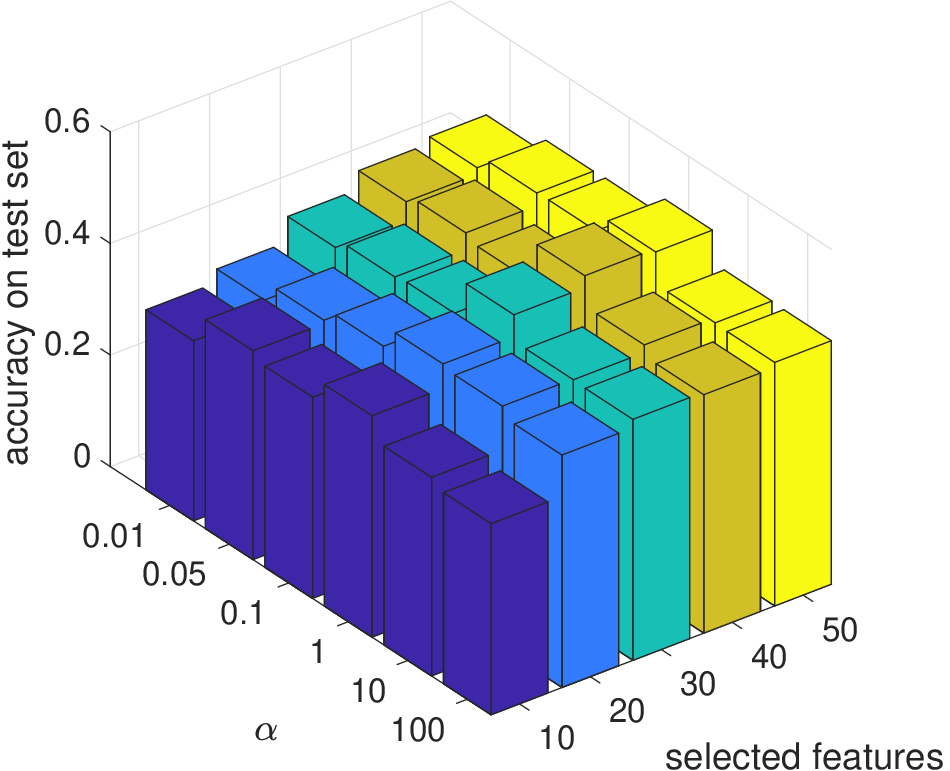}
		  & \includegraphics[width=0.28 \textwidth]{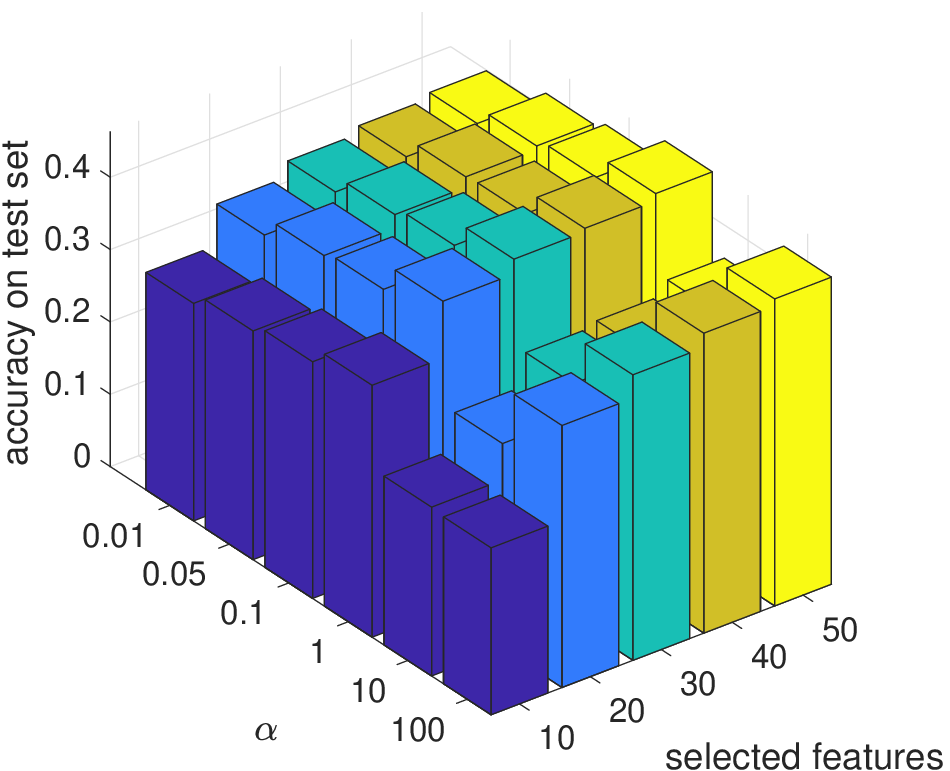}
          & \includegraphics[width=0.28 \textwidth]{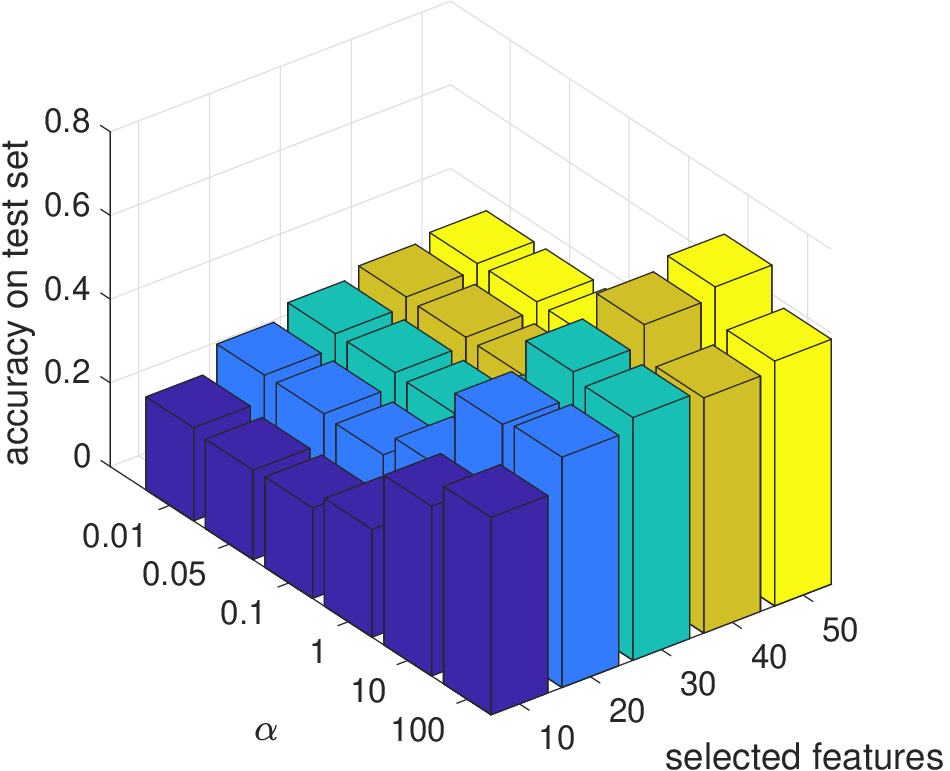} \\ \hline
	\end{tabular}
	\caption{\footnotesize Parameter sensitivity analysis on OCCA-FS with respect to testing accuracy via varying the number $q$ of selected features and regularization parameter $\alpha$ on the six datasets.} \label{fig:parameter}
\end{figure}


\subsection{Comparisons of plain NEPv with accelerated NEPv}
As the computation complexity
for solving \eqref{eq:opt-OCCA+L21}
is affected by the number of input features, we now evaluate our plain NEPv (or NEPv for short)
in \Cref{alg:NEPvSCF4OLSR+L21}
against the LOCG-accelerated NEPv (or AccNEPv for short) in \Cref{alg:NEPvLOCG} in terms of computation time and accuracy with respect to varying the number of input features.
For the task of feature selection, given a dataset, we simulate a collection of similar datasets by adding random noisy features and thereby without introducing new meaningful features.
In this experiment, we take COIL20 as the base dataset, and create a pool of datasets named COIL20-$t$ by appending $t \times 1000$ noisy features randomly drawn from the uniform distribution in the interval $(0,0.01)$, and so the total number of input features for COIL20-$t$ is $1024 + 1000 t$. The evaluation criteria on these datasets are the same
as in subsections~\ref{sec:classifcation} and \ref{sec:convergence}, while the main focus of  comparison
is computational time by  NEPv and AccNEPv in solving \eqref{eq:opt-OCCA+L21}.

\begin{figure}[!ht]
	\centering
\begin{tabular}{ccc}
\hline
{\footnotesize COIL20-1} & {\footnotesize COIL20-2} & {\footnotesize COIL20-3}\\
\includegraphics[width=0.28 \textwidth]{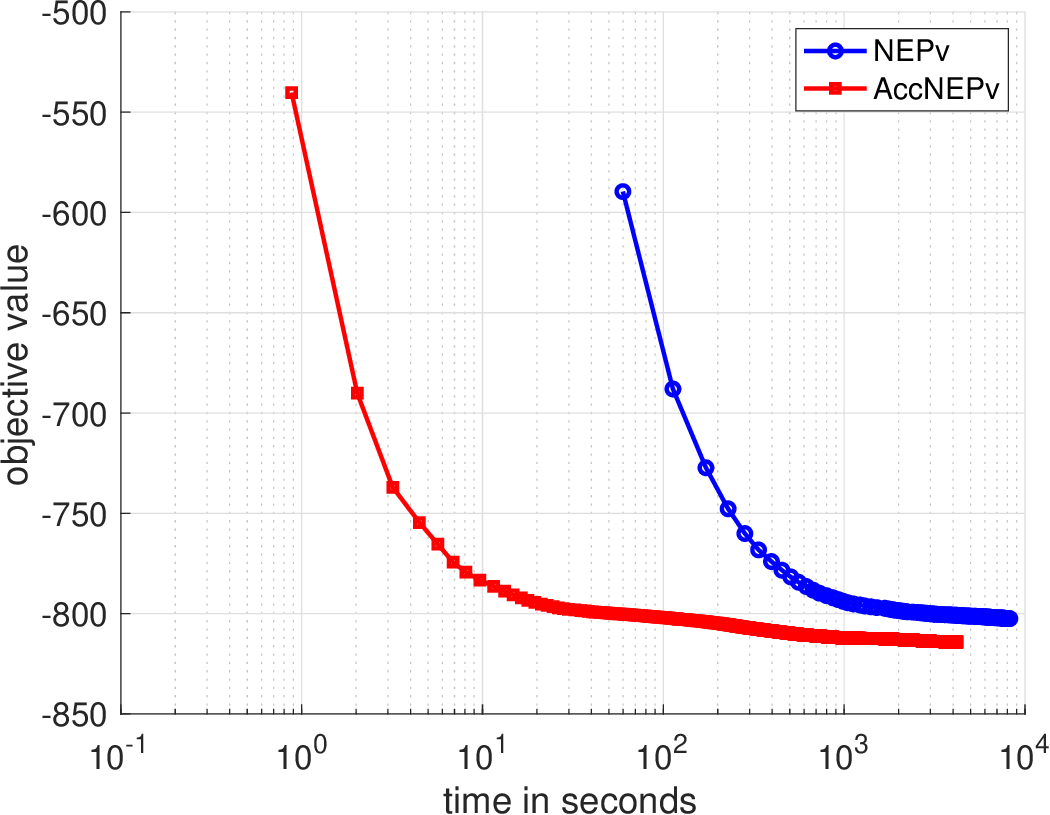} & \includegraphics[width=0.28 \textwidth]{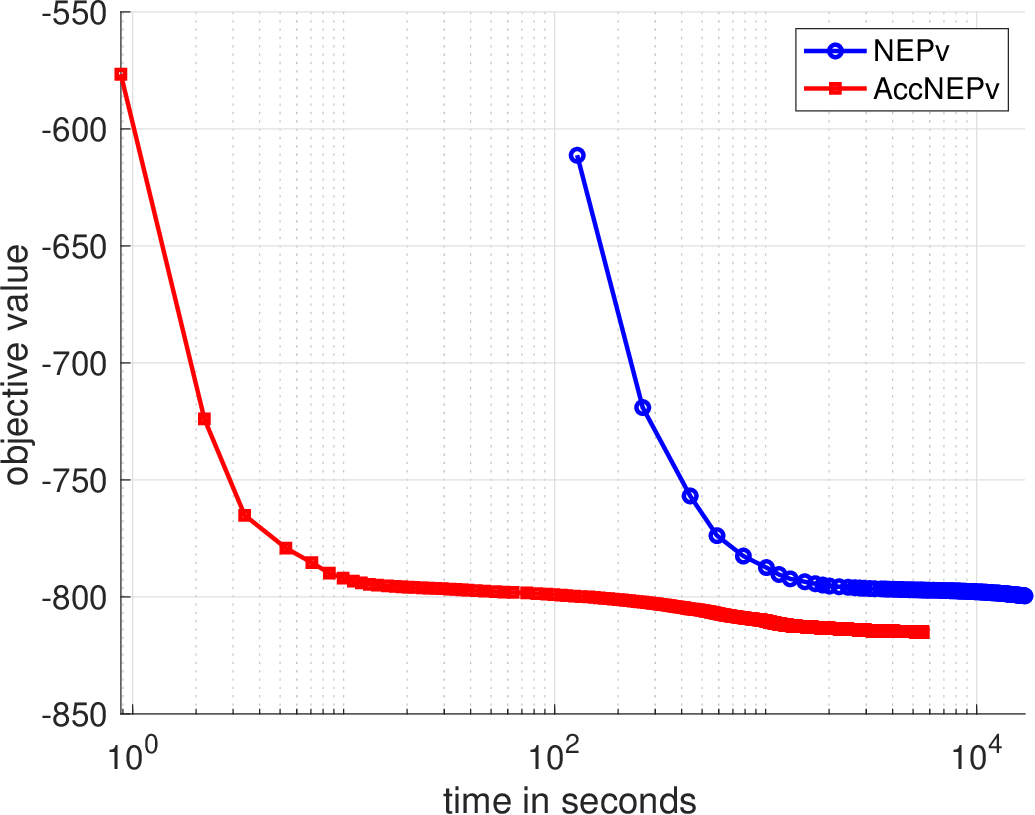} &
\includegraphics[width=0.28 \textwidth]{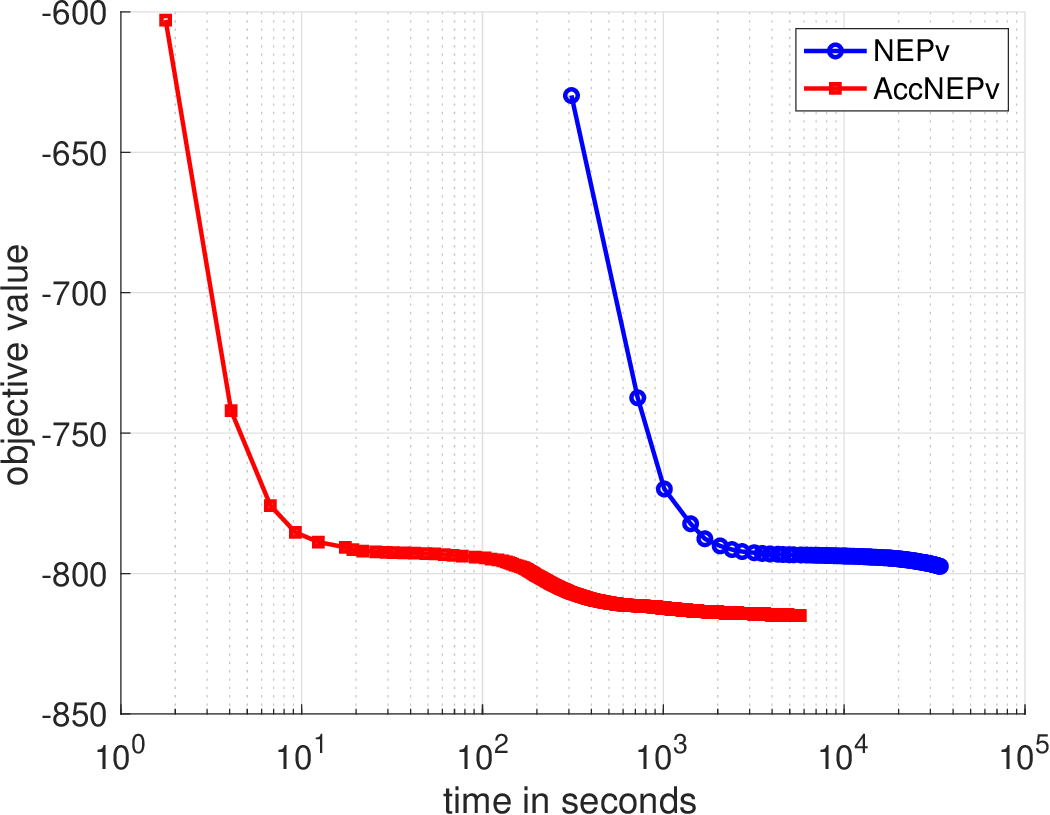} \\
\includegraphics[width=0.28 \textwidth]{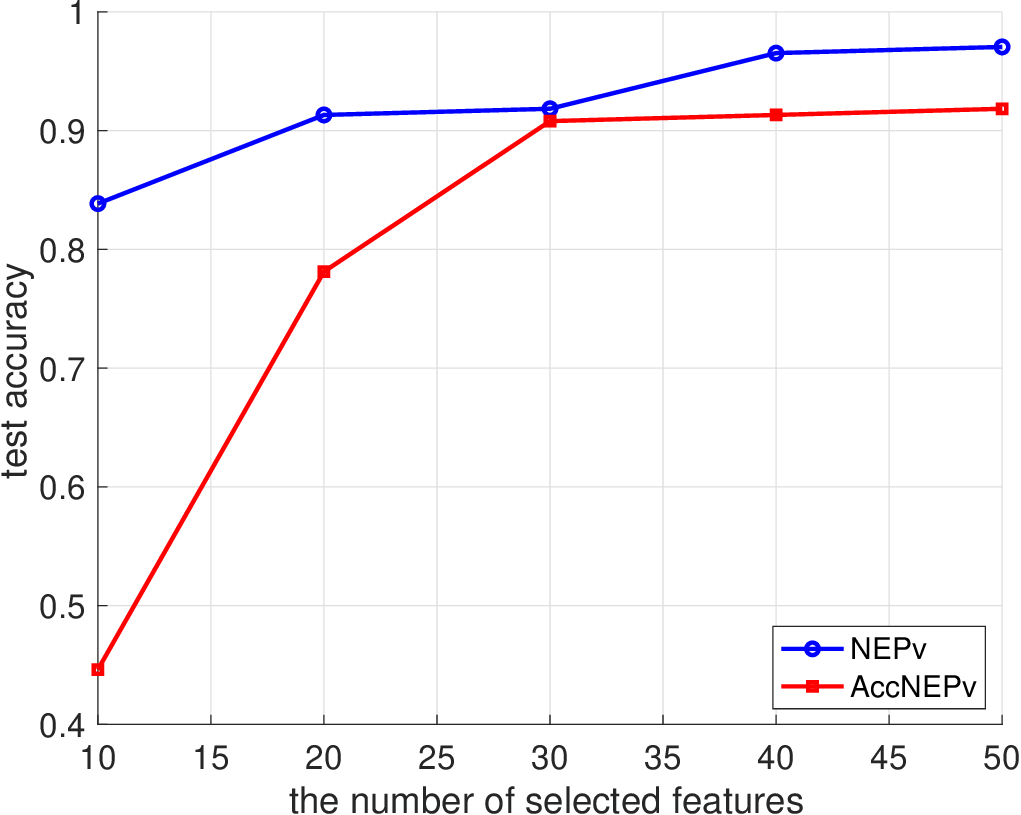} &
\includegraphics[width=0.28 \textwidth]{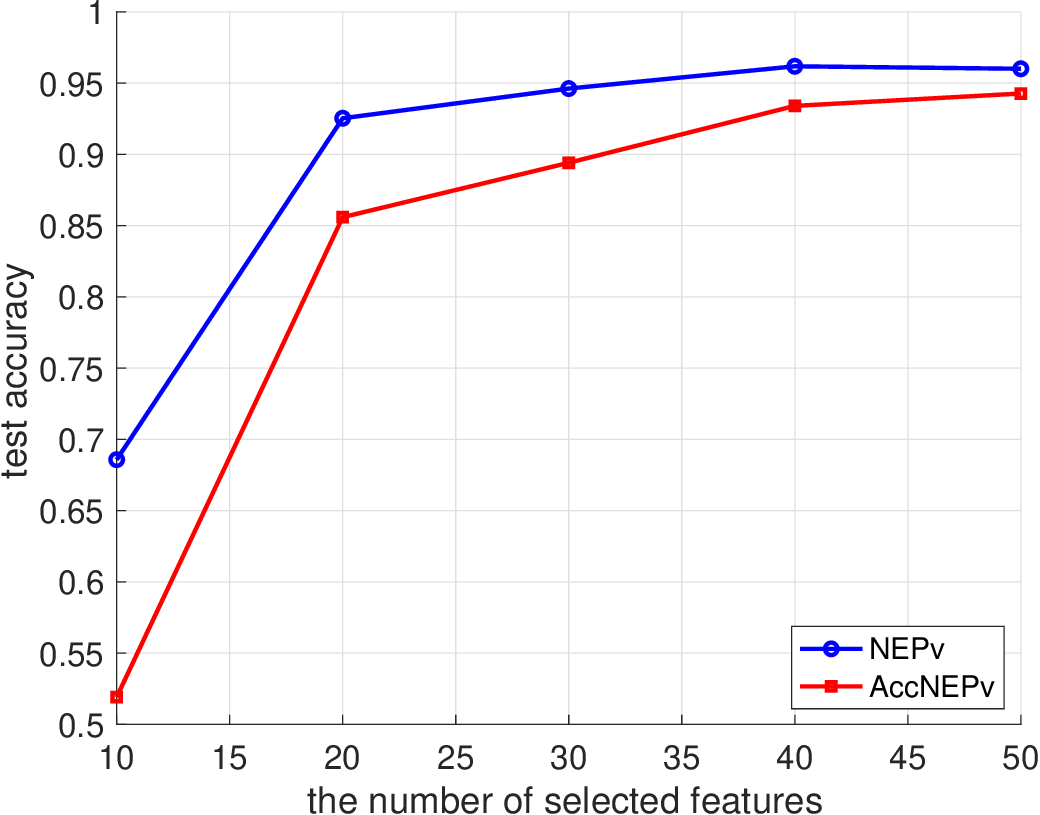} &
\includegraphics[width=0.28 \textwidth]{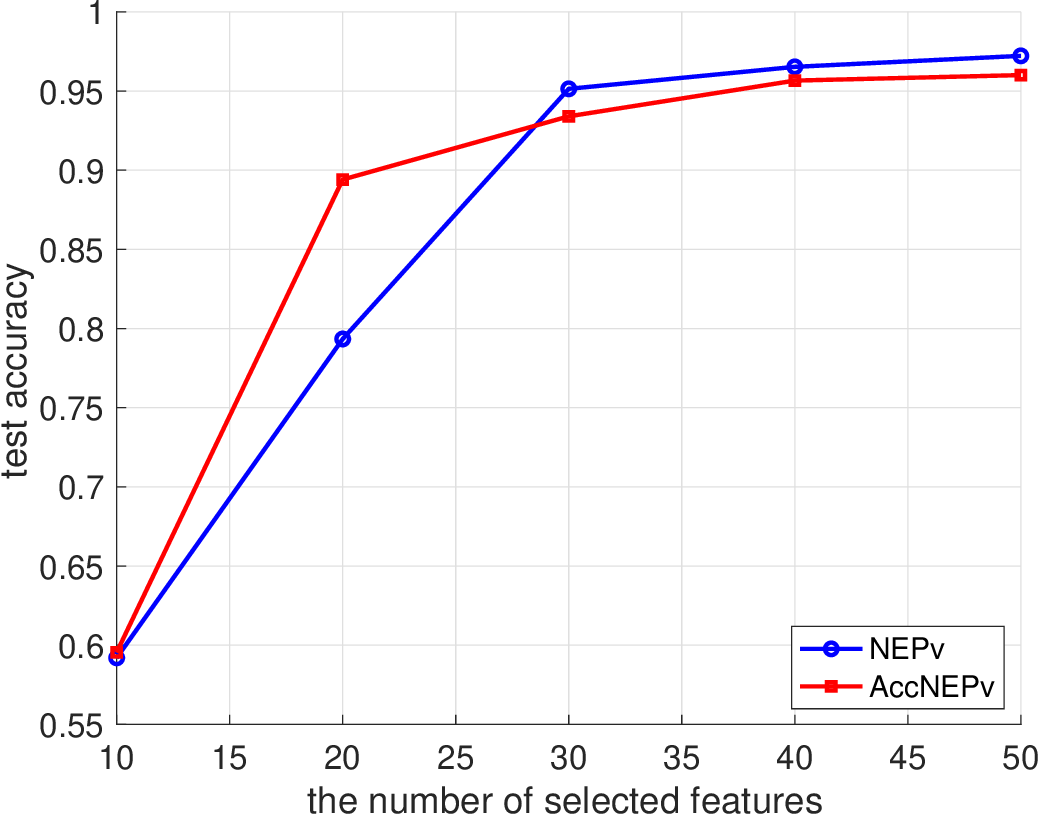} \\
\hline
{\footnotesize COIL20-4} & {\footnotesize COIL20-5} & {\footnotesize COIL20-6}\\
\includegraphics[width=0.28 \textwidth]{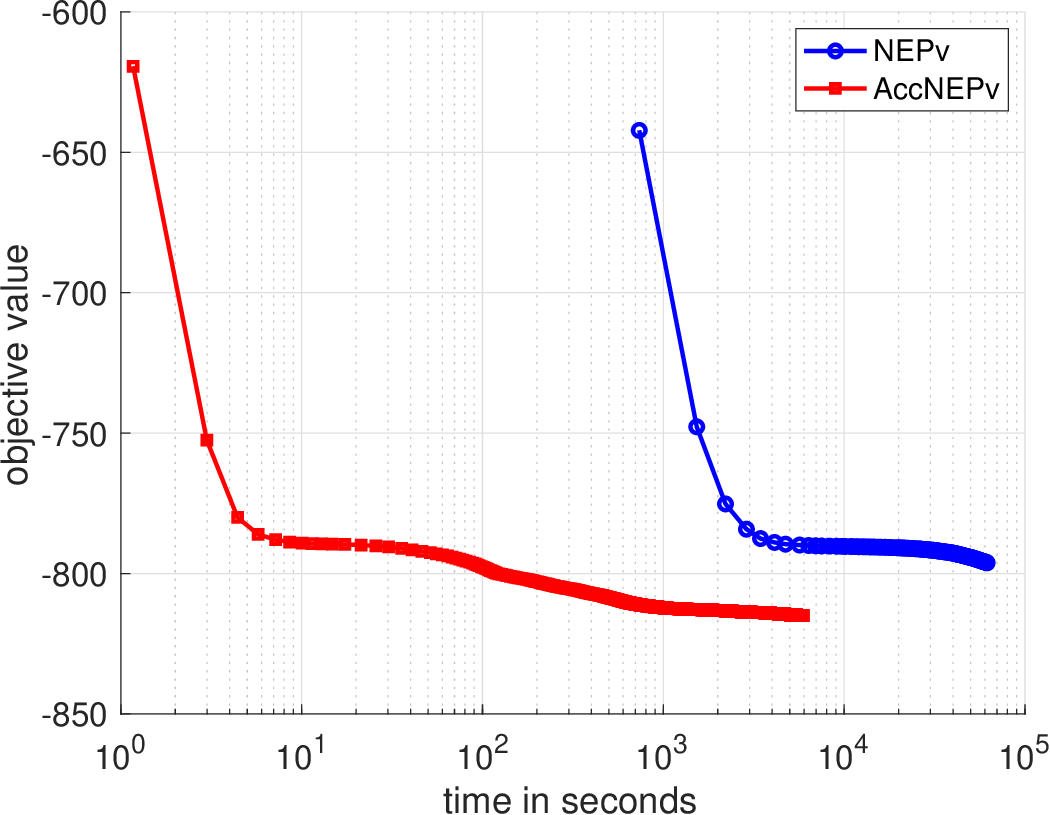} & \includegraphics[width=0.28 \textwidth]{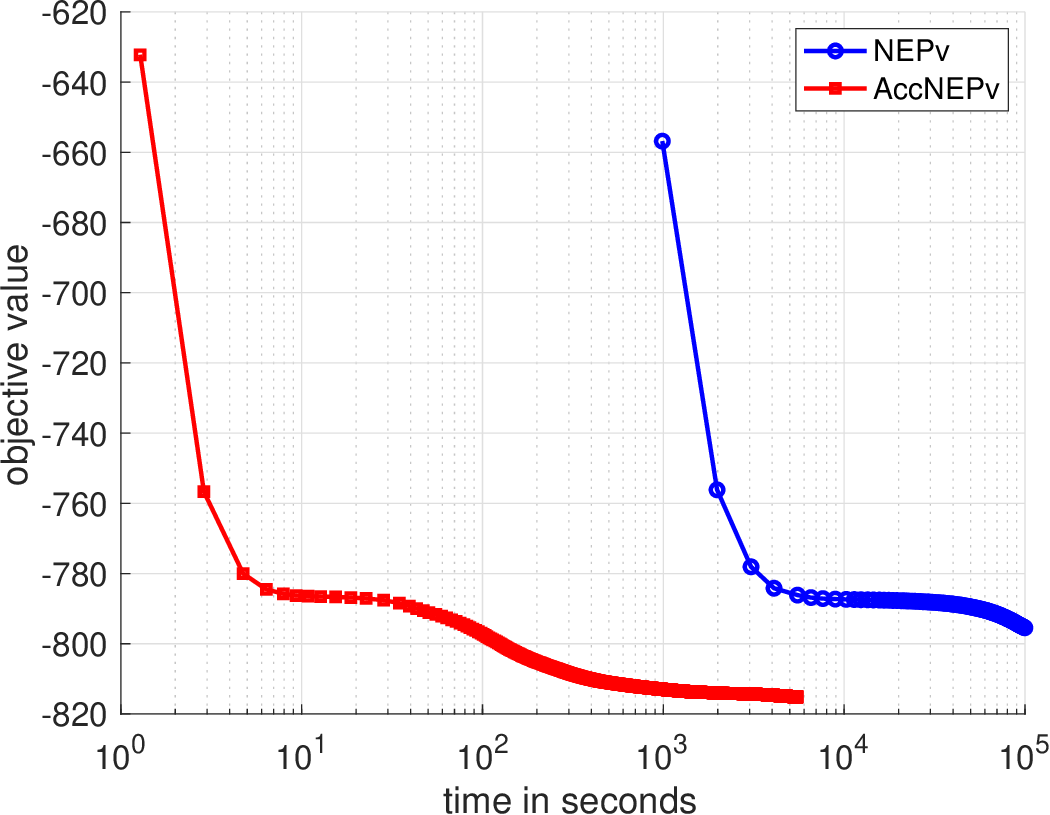} &
\includegraphics[width=0.28 \textwidth]{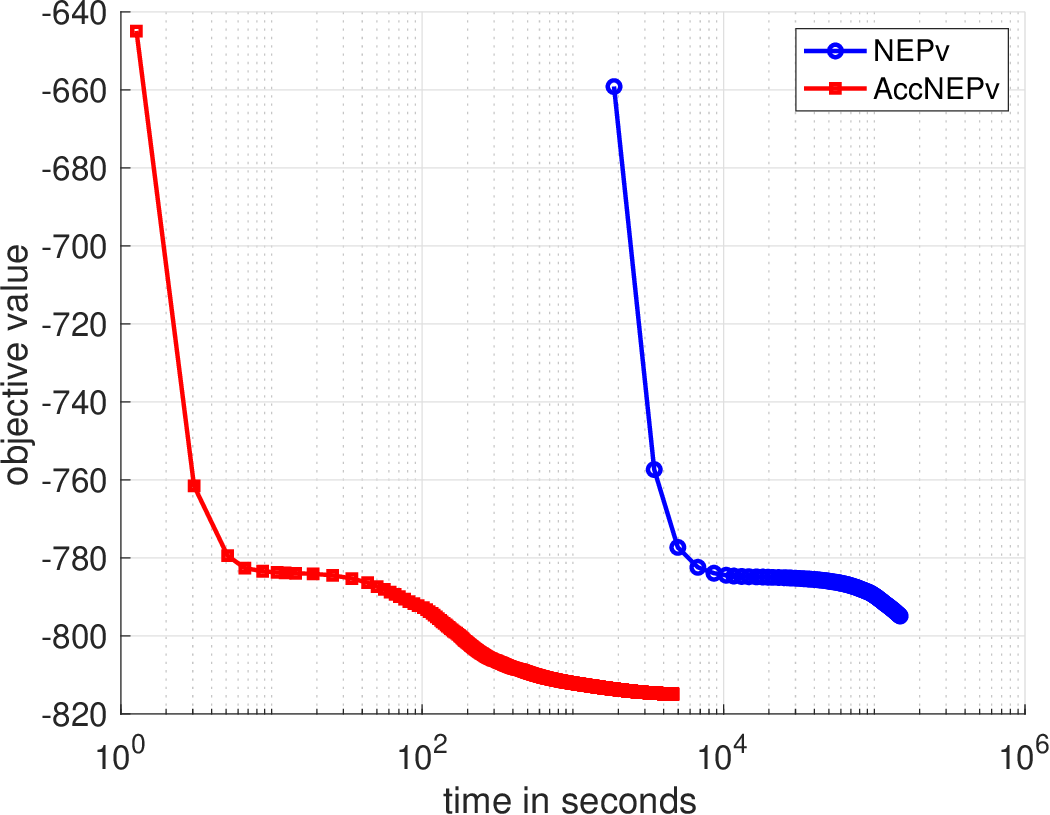} \\
\includegraphics[width=0.28 \textwidth]{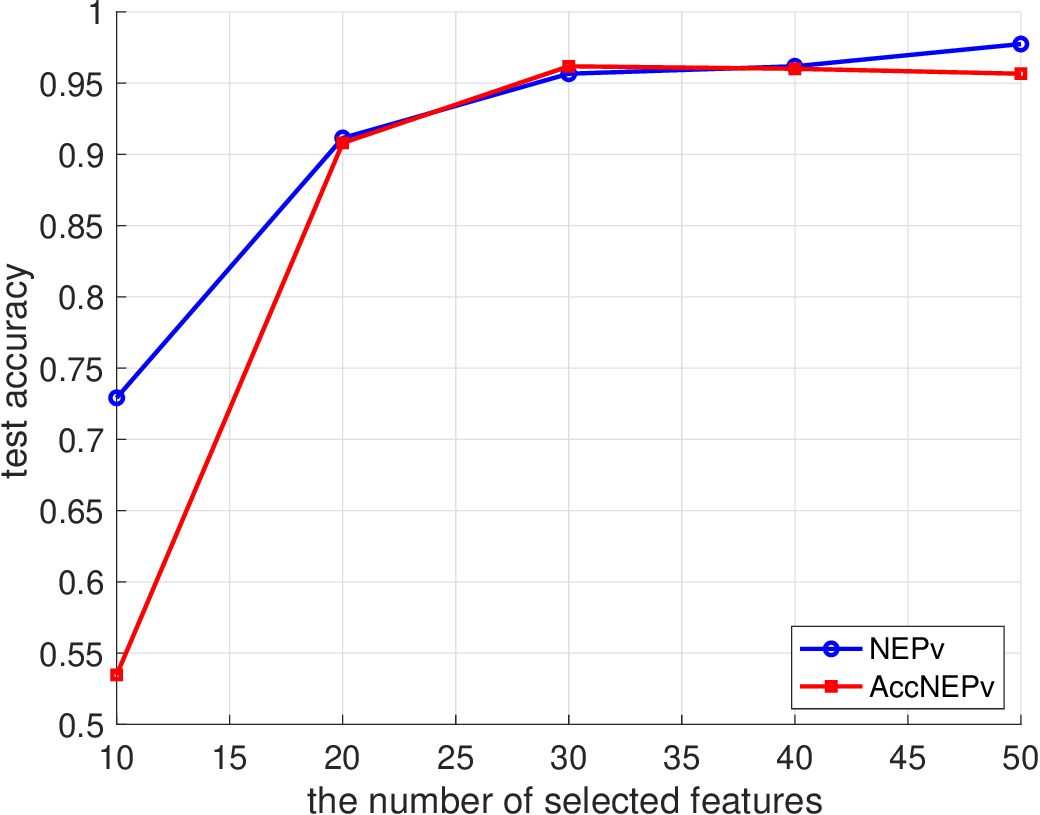} &
\includegraphics[width=0.28 \textwidth]{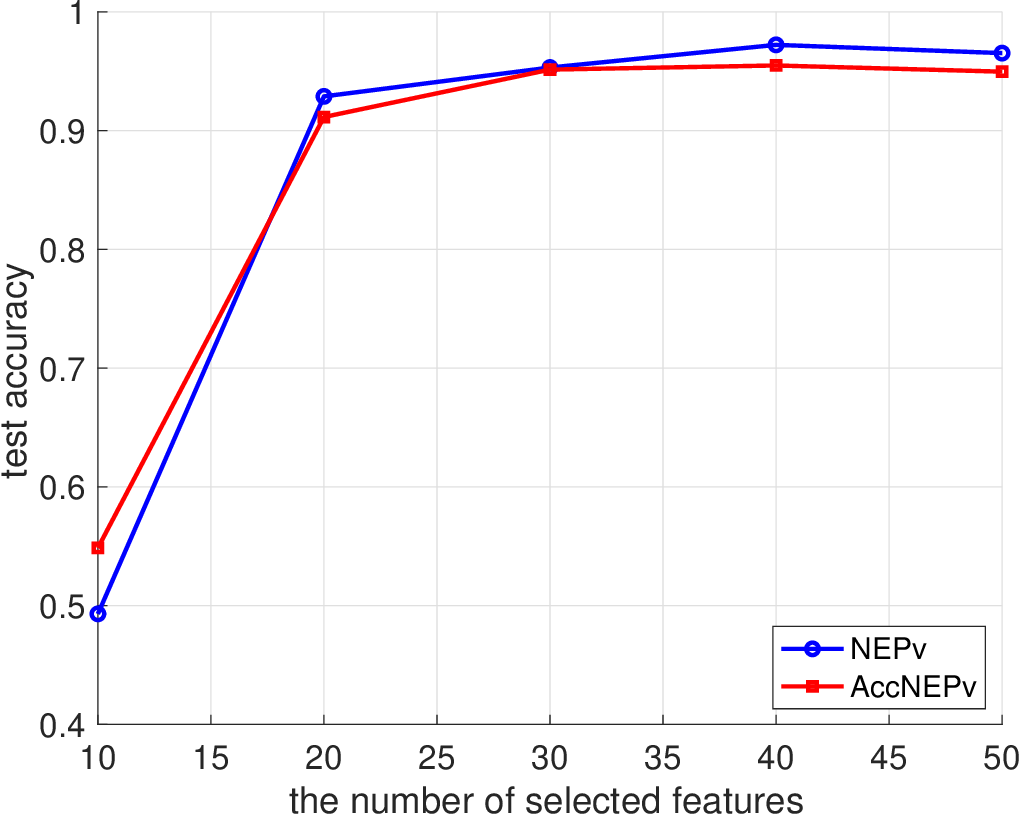} &
\includegraphics[width=0.28 \textwidth]{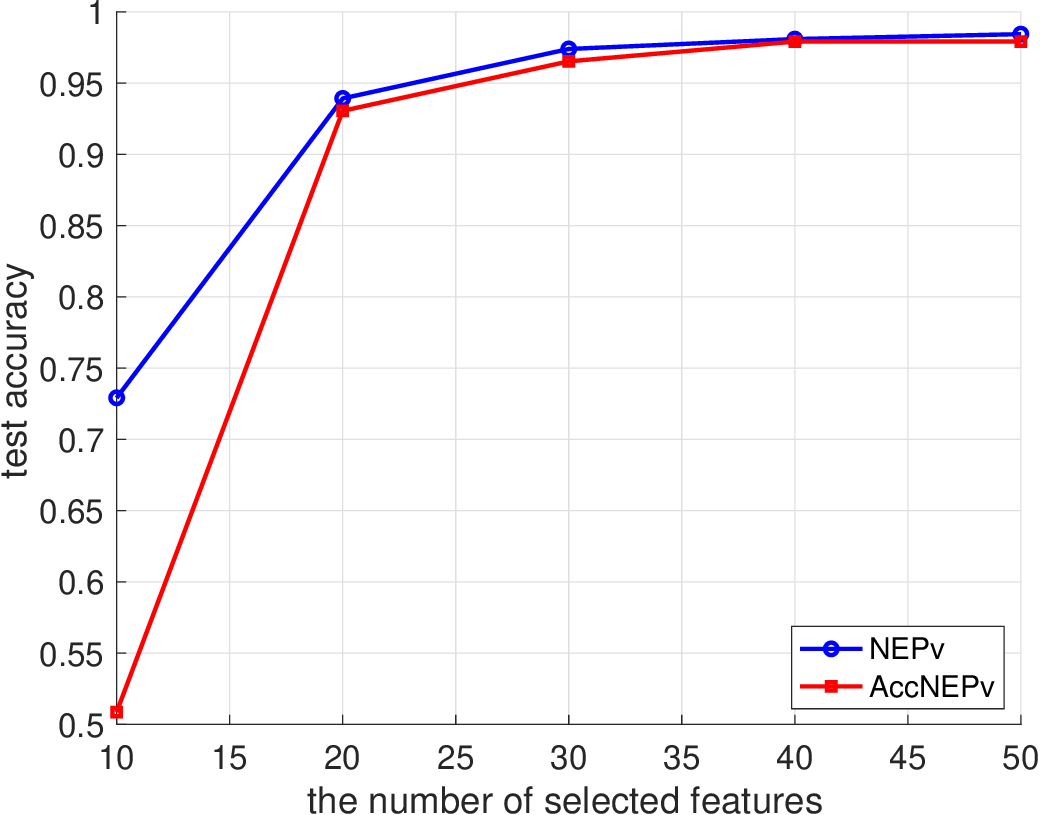} \\
\hline
\end{tabular}
\caption{\footnotesize Comparison between NEPv and AccNEPv in terms of objective value during the NEPv optimization processes and
classification accuracy on datasets COIL20-$t$ for $t\in\{1, 2, 3, 4, 5, 6\}$.} \label{fig:our-fast}
\end{figure}


For comparing NEPv and AccNEPv on the six simulated datasets with input feature dimensions
from $2024$ to $7024$ as $t$ varies in $\{1, 2, 3, 4, 5, 6\}$,
we  look at how the objective value changes as a function of elapsed CPU time as the optimization process proceeds.
This is plotted in Figure~\ref{fig:our-fast}.
First, AccNEPv not only achieves the objective value similar to NEPv, but also run much faster than NEPv. Second, from Table~\ref{tab:total-time} as the number of input features increases, the speed-up of
AccNEPv over NEPv increases too. Third, AccNEPv performs competitively to   NEPv in terms of classification accuracy.

\renewcommand{\arraystretch}{1.2}
\begin{table}
	\caption{CPU time (in seconds) on the simulated datasets COIL20-$t$} \label{tab:total-time}
	\centering
	\begin{tabular}{|c|c|c|c|c|c|c|}
		\hline
		method & $t=1$ &$t=2$ &$t=3$ &$t=4$ &$t=5$ & $t=6$\\
		\hline
		NEPv & $8.2 \cdot 10^3$ & $1.7 \cdot 10^4$ & $3.4\cdot 10^4$ & $6.2\cdot 10^4$ & $1.0\cdot 10^5$ & $1.5 \cdot 10^5$\\
		AccNEPv & $4.2\cdot 10^3$ &$5.6\cdot 10^3$ &$5.7\cdot 10^3$ &$5.9\cdot 10^3$ &$5.5\cdot 10^3$ &$4.6\cdot 10^3$\\
		\hline
	\end{tabular}
\end{table}

These experiments on the simulated datasets COIL20-$t$ reveal that the plain NEPv can be impractical for datasets with more than $10^4$ features. Unfortunately, that kind of sizes is quite common when it comes to text classification, for which the number of features is the number of words and
often there are  more than $10^4$ words, as it can be seen from
Table~\ref{tab:text-data}, where the statistics of four publicly available text
datasets\footnote{http://www.cad.zju.edu.cn/home/dengcai/Data/TextData.html} is displayed.
In what follows, we  report our experimental results on these four datasets by AccNEPv
only
because the numbers of words in the datasets are too large for NEPv to finish its computations within
reasonable amount of time (like in a day).
\begin{table}[h]
	\caption{Statistics of the four text datasets.} \label{tab:text-data}
	\centering
	\begin{tabular}{l|cccc}
		\hline
		 & RCV1\_4Class & Reuters21578 & TDT2 & 20NewsHome \\\hline
		number of documents ($p$) & 9625 & 8293 & 9394 & 18774\\
		 number of words  ($n$) & 29992 & 18933 & 36771 & 61188\\
		 number of class labels  $(k)$ & 4 & 65 & 30 & 20 \\              \hline
	\end{tabular}
\end{table}
Figure~\ref{fig:our-fast-text} and Table~\ref{tab:total-time-text} summarize our
experimental results by AccNEPv.
From Figure~\ref{fig:our-fast-text}, it can be seen that AccNEPv converges very fast and yields good classification accuracy.
AccNEPv takes a few hours to finish the experiments as indicated by Table~\ref{tab:total-time-text}.
For example, NEPv takes about 12 hours for 20NewsHome, which has more than 60 thousands of features and 18 thousands of  documents. These experimental results demonstrate that AccNEPv is effective and works very well
for large-scale high-dimensional real-world datasets.

\begin{table}[h]
	\caption{ CPU time (in hours) by AccNEPv on the four text datasets.} \label{tab:total-time-text}
	\centering
	\begin{tabular}{lcccc}
		\hline
		dataset & RCV1\_4Class & Reuters21578 & TDT2 & 20NewsHome \\\hline
		AccNEPv & $9.98$ & $10.97$ & $11.19$ & $12.46$ \\
		\hline
	\end{tabular}
\end{table}

\begin{figure}[!ht]
\centering
\begin{tabular}{c|c|c|c}
		\hline
{\footnotesize RCV1\_4Class} & {\footnotesize Reuters21578} & {\footnotesize TDT2} & {\footnotesize 20NewsHome}\\
\includegraphics[width=.22 \textwidth]{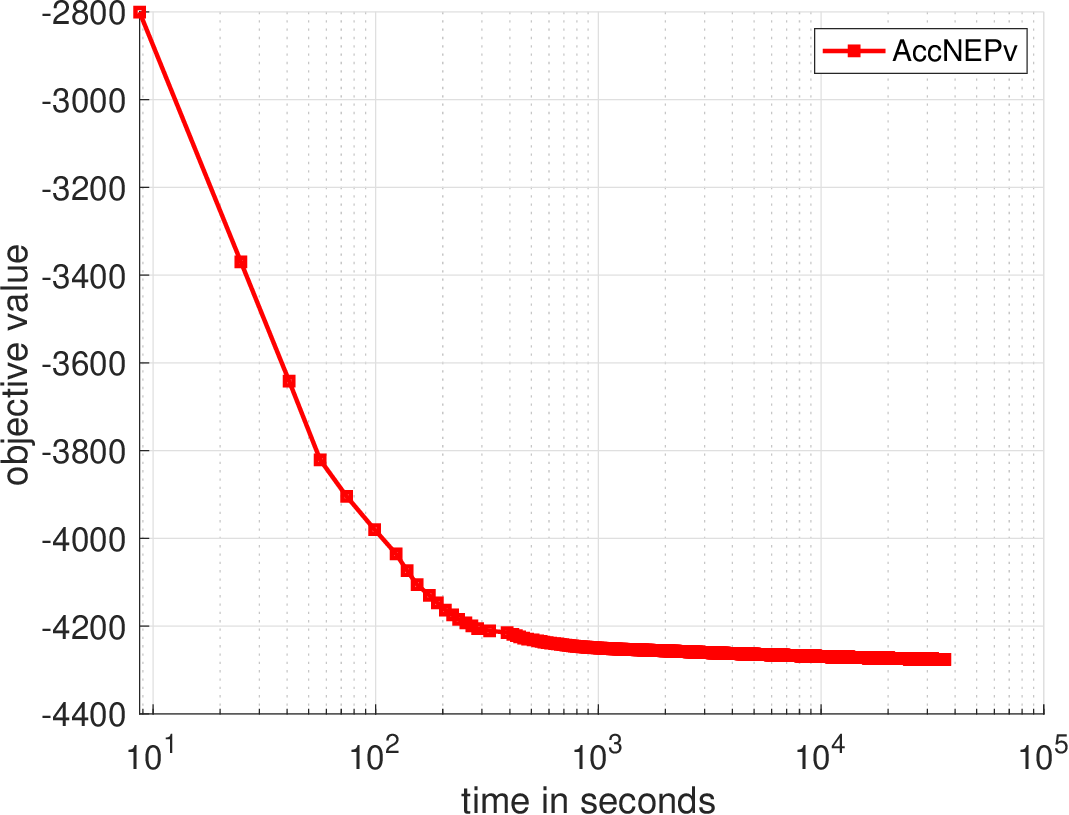}
    & \includegraphics[width=.22 \textwidth]{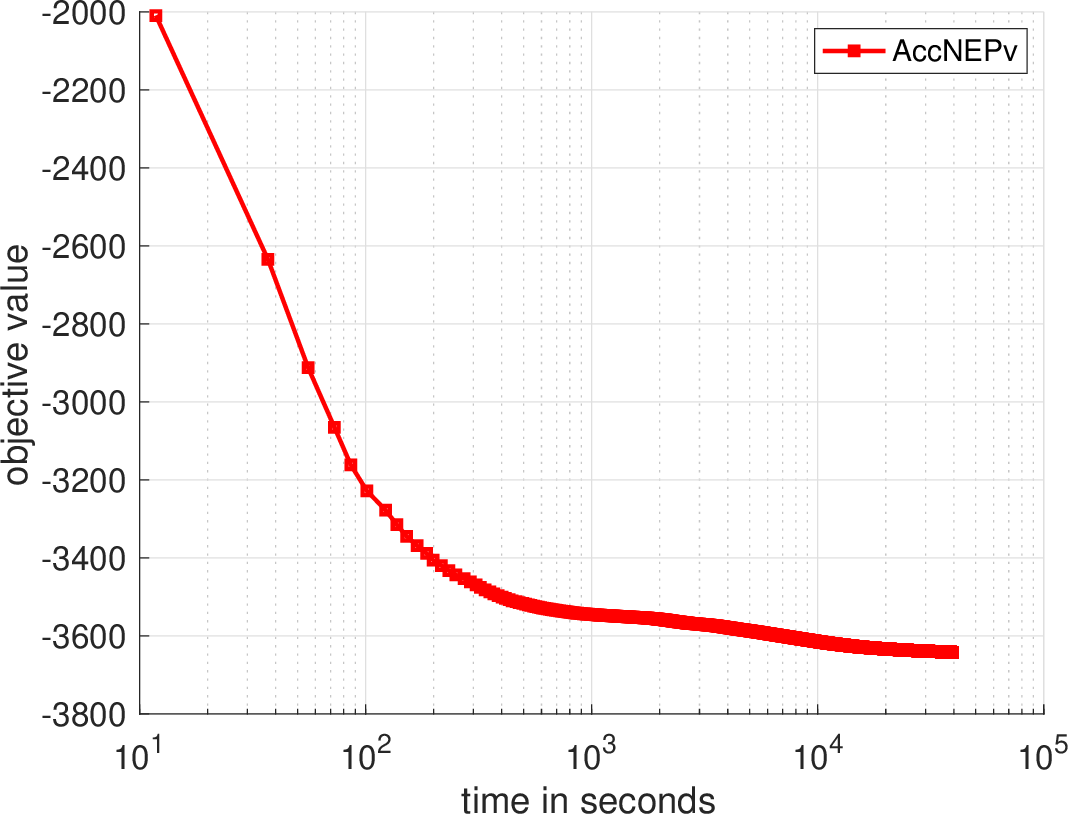}  
    & \includegraphics[width=.22 \textwidth]{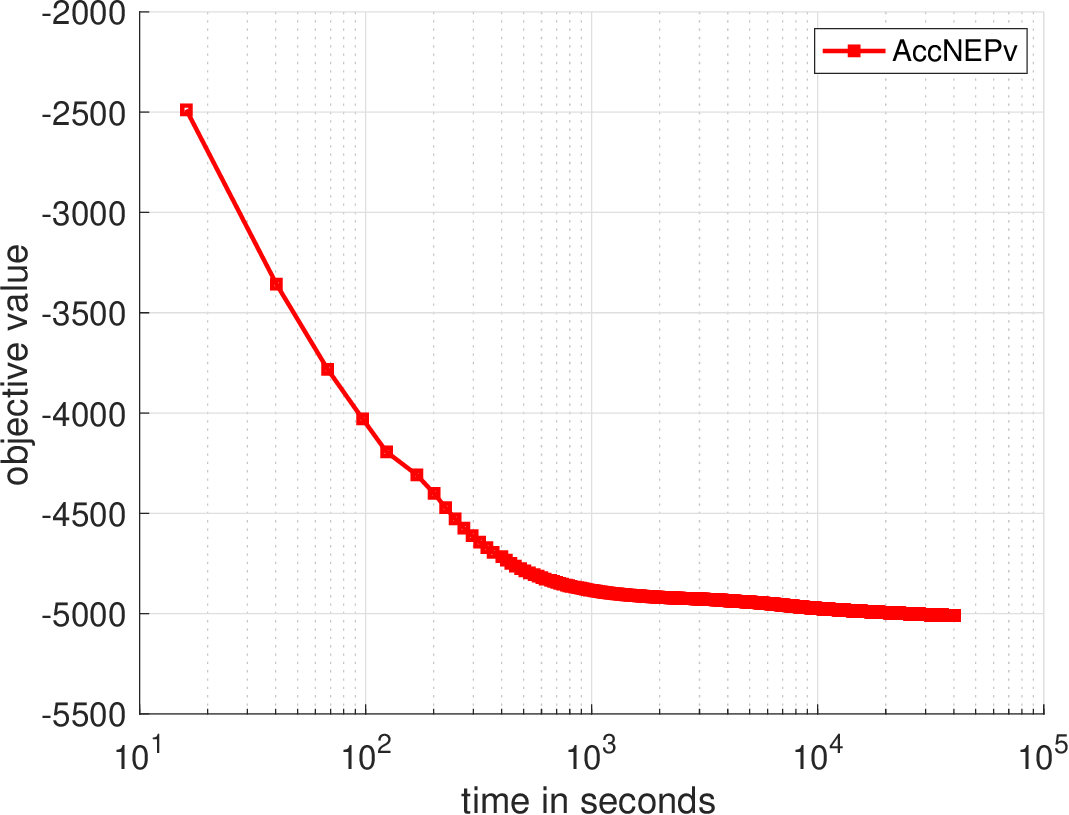}
    & \includegraphics[width=.22 \textwidth]{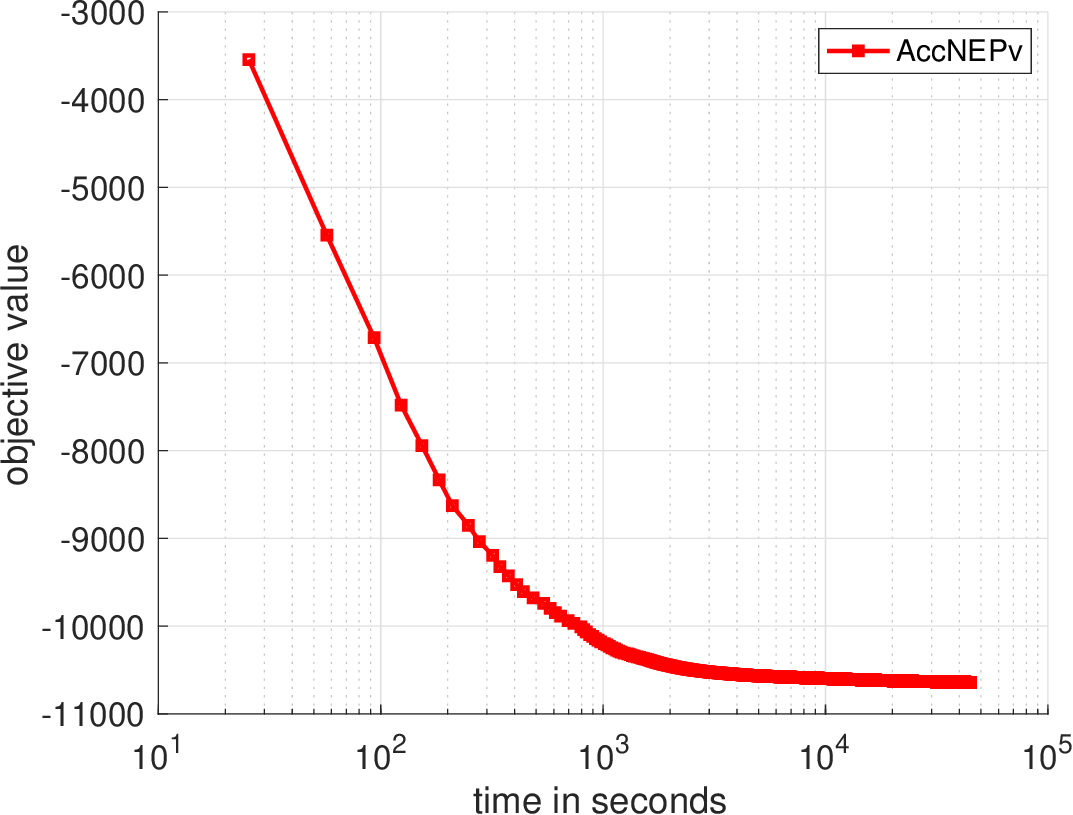}  \\
\includegraphics[width=.22 \textwidth]{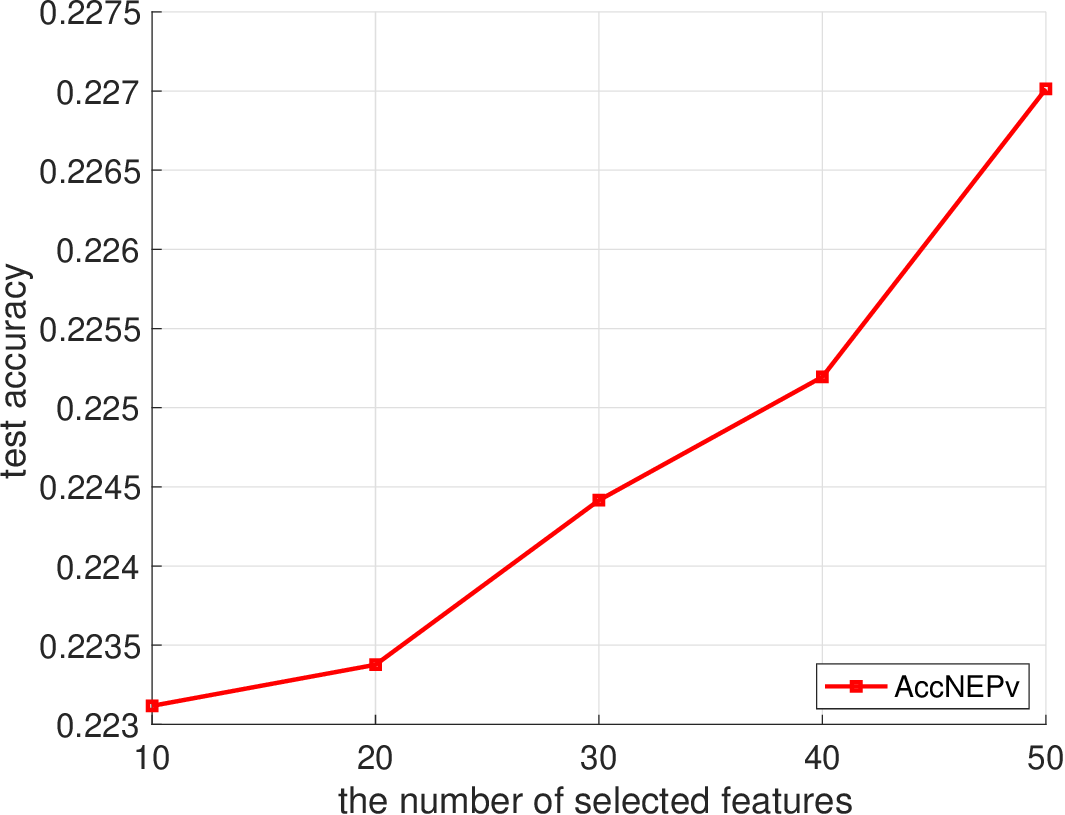}
	& \includegraphics[width=.22 \textwidth]{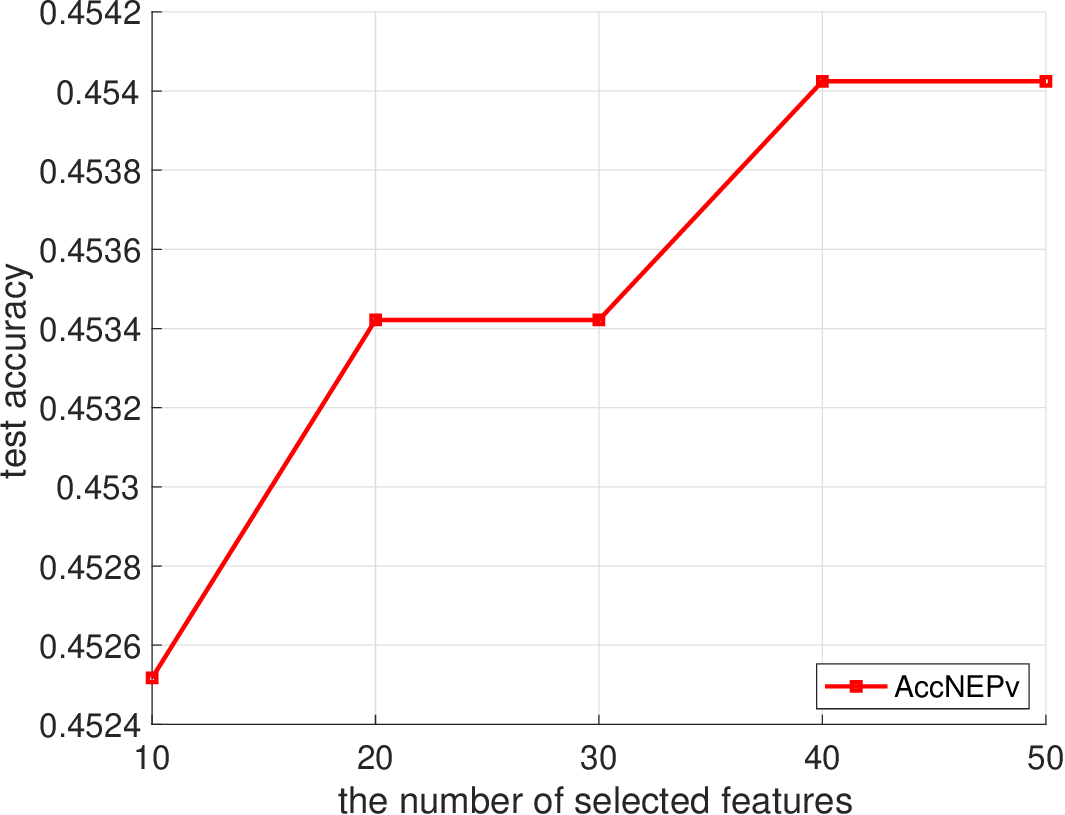}
    & \includegraphics[width=.22 \textwidth]{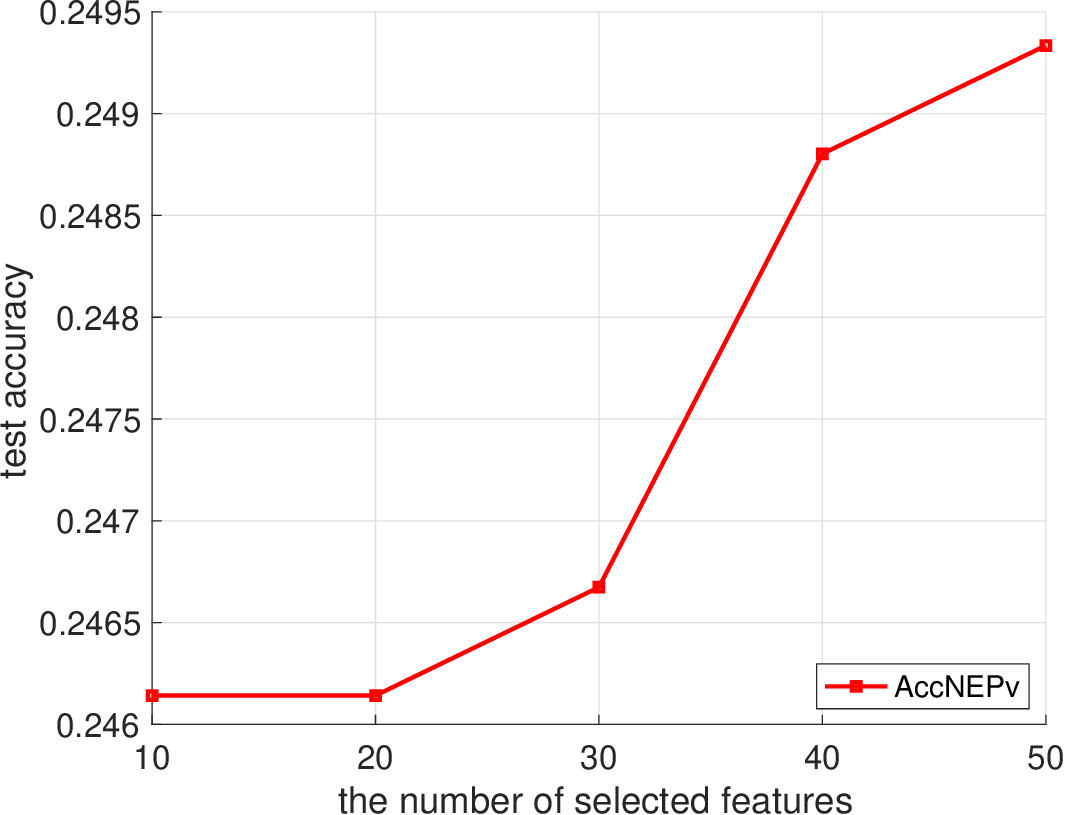}
    & \includegraphics[width=.22 \textwidth]{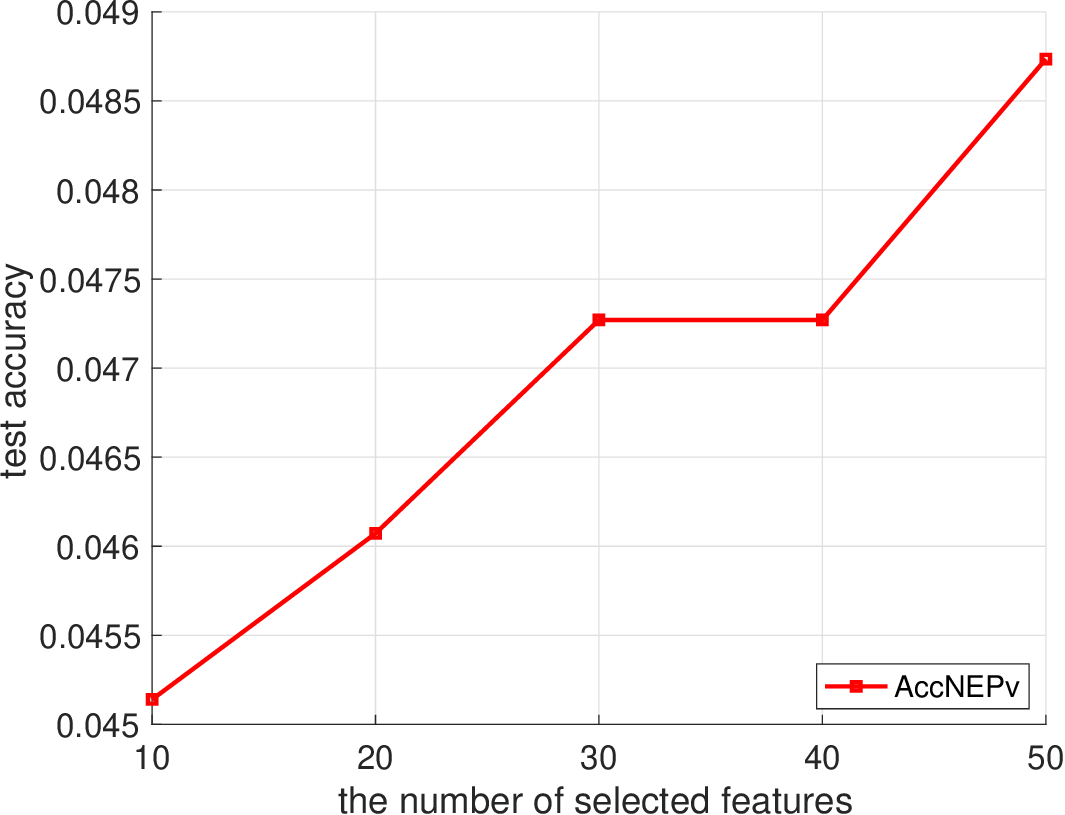}
%
	\end{tabular}
\caption{\footnotesize Experimental results by AccNEPv on the four text datasets in \Cref{tab:text-data} in terms of both objective value during optimization iterations and  classification accuracy.} \label{fig:our-fast-text}
\end{figure}


\section{Conclusion}\label{sec:concl}
In this paper, motivated by orthogonal canonical correlation analysis (OCCA) \cite{zhwb:2022},  we
propose a novel supervised feature selection model that combines OCCA with the $(2,1)$-norm regularization, which we call
OCCA21.
It is explained that the new model has a close mathematical connection to
the supervised feature selection model OLSR-OS21
(orthogonal least squares regression with optimal scaling and with the $(2,1)$-norm regularization)
\cite{Zhang2018}.
It turns out that OCCA21 falls into the class of optimization problems on the Stiefel manifold,  because of \eqref{eq:L21->trace}, for which
the NEPv approach \cite{li:2024} is theoretically sound and numerically efficient.
Through numerical experiments, we demonstrate the performances of our eventual feature selection method, OCCA-FS,
in comparison with PEB-FS and other state-of-the-art feature selection methods in the literature, in the areas
of both numerical optimization and feature selection. It is concluded that OCCA-FS computes much better optimizers than PEB-FS, produces superior classification performance to PEB-FS
and often OCCA-FS comes out on the top among all feature selection methods in comparison.

It is noted that in \cite{Zhang2018} OLSR-OS21 is solved alternatingly between its two optimizing variables.
Unfortunately, one of the two alternating updates is not done correctly, invalidating the major convergence
claim  in \cite{Zhang2018}. That incorrectness inevitably affects the classification performance
of PEB-FS, the eventual feature selection method of \cite{Zhang2018}, as confirmed by our extensive numerical experiments.

{\small
\def\noopsort#1{}\def\l{\char32l}\def\v#1{{\accent20 #1}}
  \let\^^_=\v\def\hbk{hardback}\def\pbk{paperback}

}

\end{document}